\newenvironment{acknowledgment}{%
	% Rename Abstract to Acknowledgements
	\begin{abstract}
	}{%
	\end{abstract}
}
\title{The Four Point Condition: An Elementary Tropicalization of Ptolemy's Inequality}
\author[1]{Mario G\'omez}
\author[2]{Facundo M\'emoli}
\affil[1]{Department of Mathematics, 
	 	The Ohio State  University.\\
	 	\texttt{gomezflores.1@osu.edu}}
\affil[2]{Department of Mathematics and Department of Computer Science and Engineering, 
	 	The Ohio State University.\\ %	
	 	\texttt{memoli@math.osu.edu}}
\date{\today}
\begin{document}
	
\maketitle
\begin{abstract}
	Ptolemy's inequality is a classic relationship between the distances among four points in Euclidean space. Another relationship between six distances is the 4-point condition, an inequality satisfied by the lengths of the six paths that join any four points of a metric (or weighted) tree. The 4-point condition also characterizes when a finite metric space can be embedded in such a tree. The curious observer might realize that these inequalities have similar forms: if one replaces addition and multiplication in Ptolemy's inequality with maximum and addition, respectively, one obtains the 4-point condition. We show that this similarity is more than a coincidence. We identify a family of Ptolemaic inequalities in CAT-spaces parametrized by a real number and show that a certain limit involving these inequalities, as the parameter goes to negative infinity, yields the 4-point condition, giving an elementary proof that the latter is the tropicalization of Ptolemy's inequality.
\end{abstract}

\paragraph{Ptolemy's inequality.}
Ptolemy's inequality is a classical result in Euclidean geometry that relates the six distances among four points in the plane. Given $p_1,p_2,p_3,p_4 \in \mathbb{R}^2$ with $d_{ij} := \|p_i-p_j\|$, the result states
\begin{equation}\label{ineq:ptolemy}
	d_{13}\,d_{24} \leq d_{12}\,d_{34}+d_{23}\,d_{41}.
\end{equation}
\begin{figure}[ht]
	\begin{minipage}{0.48\linewidth}
		\centering
		\begin{tikzpicture}[scale=2]
	\tikzmath{
		% Point thickness
		\pt=0.025;
		% Points of the square
		\px1 = 1.65;	\py1 = -1.3;
		\px2 = 1.25;	\py2 = 0.15;
		\px3 = 0;		\py3 = 0;
		\px4 = -0.2;	\py4 = -1;
	}
	
	% Draw all the points in the figures
	\draw[fill] (\px1,\py1) circle [radius=\pt] node[below right]{$p_1$};
	\draw[fill] (\px2,\py2) circle [radius=\pt] node[above right]{$p_2$};
	\draw[fill] (\px3,\py3) circle [radius=\pt] node[above left ]{$p_3$};
	\draw[fill] (\px4,\py4) circle [radius=\pt] node[below left ]{$p_4$};
	
	% Draw lines
	\draw (\px1,\py1) -- (\px2,\py2) node[midway, right]{$d_{12}$};
	\draw (\px2,\py2) -- (\px3,\py3) node[midway, above]{$d_{23}$};
	\draw (\px3,\py3) -- (\px4,\py4) node[midway, left ]{$d_{34}$};
	\draw (\px4,\py4) -- (\px1,\py1) node[midway, below]{$d_{41}$};
	
	\draw[dashed] (\px1,\py1) -- (\px3,\py3) node[pos=0.35, above=4.5pt]{$d_{13}$};
	\draw[dashed] (\px2,\py2) -- (\px4,\py4) node[pos=0.60, above=3.0pt, left =3.0pt]{$d_{24}$};
\end{tikzpicture}
	\end{minipage}
	\hfill
	\begin{minipage}{0.48\linewidth}
		\centering
		\begin{tikzpicture}[scale = 3,
	% rotate around x = 90,
	rotate around y = 115,
	rotate around x = 5,
	% z={(0.35cm,-0.4cm)}, % direction z gets projected to; can also change x,y
	% use cm to specify non-transformed coordinates
	line join=round, line cap=round % makes the corners look nicer
	]
	\tikzmath{
		% Point thickness
		\pt=0.5;
		% Points of the original quadrilateral
		\px1 = 0.6;		\py1 = 2;
		\px2 = 0.70;	\py2 = -0;
		\px3 = 0;		\py3 = -0;
		\px4 = -0.35;	\py4 = 0.75;
		% Reflect the points (improves the perspective)
		\px1 = \px1;	\py1 = -\py1;
		\px2 = \px2;	\py2 = -\py2;
		\px3 = \px3;	\py3 = -\py3;
		\px4 = \px4;	\py4 = -\py4;
		% Quantities used to calculate the radius of the arc
		\Ra = sqrt((\px4-\px3)*(\px4-\px3)+(\py4-\py3)*(\py4-\py3));
		\Rb = sqrt((\px2-\px3)*(\px2-\px3)+(\py2-\py3)*(\py2-\py3));
		\Rc = sqrt((\px4-\px2)*(\px4-\px2)+(\py4-\py2)*(\py4-\py2));
		\RR = sqrt(\Ra*\Ra - ((\Rc*\Rc+\Ra*\Ra-\Rb*\Rb)/(2*\Rc))*((\Rc*\Rc+\Ra*\Ra-\Rb*\Rb)/(2*\Rc)));
		% Angular lenght of the arc
		\t=80;
	}
	% Draw the base quadrilateral
	\draw plot[mark=*, mark size=\pt] (\py1,0,\px1) node[right]{$p_1$};
	\draw plot[mark=*, mark size=\pt] (\py2,0,\px2) node[above right]{$p_2$};
	\draw plot[mark=*, mark size=\pt] (\py3,0,\px3);
	\draw plot[mark=*, mark size=\pt] (\py4,0,\px4) node[below left]{$p_4$};
	
	\draw (\py1,0,\px1) -- (\py2,0,\px2);
	\draw (\py2,0,\px2) -- (\py3,0,\px3);
	\draw (\py3,0,\px3) -- (\py4,0,\px4);
	\draw (\py4,0,\px4) -- (\py1,0,\px1);
	
	% Draw the additional point in space
	\draw[dashed] (\py3,0,\px3) arc (0:\t:\RR) node (Pf) {};
	\draw plot[mark=*, mark size=\pt] (Pf) node[above]{$p_3$};
	
	% Join it to the base points
	\draw[dashed] (Pf) -- (\py2,0,\px2);
	\draw[dashed] (Pf) -- (\py4,0,\px4);
	\draw[dotted] (Pf) -- (\py1,0,\px1);
	\draw[dotted] (\py3,0,\px3) -- (\py1,0,\px1);

	% Add label
	\draw (\py3,0,\px3) -- (\py3+0.3,0,\px3-0.15) node[inner sep = 0, above left]{$p_3'$};
\end{tikzpicture}
	\end{minipage}
	\caption{Left: A quadrilateral on the plane. Right: A visualization of Ptolemy's inequality in $\mathbb{R}^n$. It suffices to consider $\mathbb{R}^3$ since any 4 points in general position generate a 3-dimensional subspace. With $p_1,p_2,p_4$ fixed, move $p_3$ along the circle where the distances from $p_3$ to $p_2$ and $p_4$ remain fixed until all four points are coplanar. There are two choices for the ending point $p_3'$, but if $p_1,p_2,p_4$ are not collinear, we can make the choice where $d(p_1,p_3) \leq d(p_1,p_3')$. Then the inequality on $\{p_1,p_2,p_3',p_4\}$ implies inequality (\ref{ineq:ptolemy}) because the only distance that changed was $d(p_1,p_3)$ to $d(p_1,p_3')$.}
	\label{fig:quadrilateral}
\end{figure}
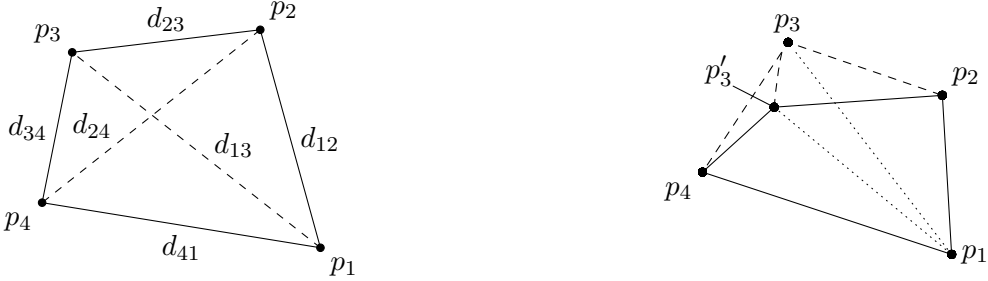
The case of equality, known as Ptolemy's Theorem, happens if and only if the points lie on a circle or a line. In fact, the inequality holds in every $\mathbb{R}^n$; see Figure \ref{fig:quadrilateral}.\\
\indent Inequality (\ref{ineq:ptolemy}) was discovered by Claudius Ptolemy, a Greek mathematician and astronomer who lived approximately between 100 and 170 AD. His treatise, the \textit{Almagest}, contains one of the most thorough tables of chords\footnote{A chord is the line segment joining two points on a circle whose length is given by the formula $R \cdot \sin(\alpha/2)$, where $R$ is the radius of the circle and $\alpha$ is the angle subtended by the chord.} of his time. Much like $\sin(x)$ and $\cos(x)$ are today, \emph{chords} were the primary trigonometric function in ancient Greece. Ptolemy computed the length of the chords on a circle of diameter 120 for angles between $0.5^\circ$ and $180^\circ$ in increments of half a degree. Among other tools, including an approximation similar to $\sin(x) \approx x$ for chords of small angles, he developed formulas for the chord of a half-angle and a double angle using the equality in equation (\ref{ineq:ptolemy}). The contemporary equivalent of these formulas is the identity for the sine of a sum of angles which, perhaps unsurprisingly, follows from Ptolemy's theorem. The \textit{Almagest} continued to be an authoritative text for centuries, not only for its mathematical results, but also for its astronomical models, the geocentric view of the universe being one example. The precise computations required for these and subsequent cosmological studies were made possible thanks to the precision in Ptolemy's table of chords.
\paragraph{Trees and the 4-point condition.}
In a completely different context, a \emph{graph} $G$ is defined as a pair $(V,E)$, where an element $v \in V$ is called a vertex (or node) and $e \in E$ is called an edge. Edges $e \in E$ connect two distinct vertices\footnote{Notice that our definition of edges does not allow loops nor multiple edges. These types of graphs are called simple.} $v_1, v_2 \in V$ and we denote this by $e = \{v_1,v_2\}$. A path between two vertices $v_1,v_2$ is a sequence of edges $e_i = \{w_i,w_i'\}$ for $i=1,\dots,n$, where $w_1=v_1$, $w_{i+1} = w_i'$, and $w_n'=v_2$. If we assign a weight $\ell_e > 0$ to every edge $e \in E$, then the path $e_1, \dots, e_n$ has length $\ell_{e_1} + \cdots + \ell_{e_n}$. We define the distance between two vertices $v_1, v_2$ to be the length of the shortest path joining them.\\
\indent \emph{Phylogenetics}, the study of evolutionary relations between species, is one area that benefits from assigning distances to graphs. The main object in phylogenetics is the \emph{evolutionary} or \emph{phylogenetic tree}. This is a graph with no cycles where each point represents a species and an edge connects two species if one evolved from the other. In particular, whenever there is a mutation on a species at a node $v$, a new vertex appears for the mutated organism with an edge linking it to $v$. Some phylogenetic trees are \emph{rooted}, which means that every species represented in the tree has a common evolutionary ancestor at a node $v_0$; see Figure \ref{fig:Haeckel_tree}. Additionally, by measuring quantities such as the frequency of alleles in a population (that is, gene variations), it is possible to define a distance on this tree quantifies the genetic difference between two species. In that regard, a common problem in this field is finding a good tree representation for a set of species. Mathematically, this means that given a finite set $X$ with a distance function $d_X$, we want to determine whether there exists a tree $T$ and an isometric embedding\footnote{An isometric embedding is a function $f:(X,d_X) \to (Y,d_Y)$ between metric spaces such that $d_Y(f(x), f(x')) = d_X(x,x')$ for all $x,x' \in X$.} $\iota:X \to T$ such that $\iota(x)$ is a vertex of $T$ for all $x \in X$. The answer is given by the following inequality involving 4-tuples of points. Let $x_1,x_2,x_3,x_4 \in X$ and denote their distances by $d_{ij} := d_X(x_i,x_j)$. Such a tree $T$ exists if and only if the inequality
\begin{equation}\label{ineq:4pt_condition}
	d_{13}+d_{24} \leq \max(d_{12}+d_{34}, d_{23}+d_{41})
\end{equation}
holds for every set of four points in $X$. Inequality (\ref{ineq:4pt_condition}) is known as the \emph{4-point condition} and such a metric space $X$ is called \emph{tree-like}. This result was proved by Zaretskii in 1965 \cite{4pc-zar65} in the case of integer distances and extended by Sim\~oes Pereira for non-integer distances in 1969 \cite{4pc-simoes-pereira}. It was also proved independently by Buneman in the 1970's \cite{4pc-buneman-1971, 4pc-buneman-1974}. There is a generalization of this result where $X$ is not required to be finite. In that case, $X$ might not be embeddable in a tree but instead in a more general object known as an \emph{$\mathbb{R}$-tree}. We give the definition of $\mathbb{R}$-trees in Section \ref{sec:definitions}.\\
\begin{figure}[ht]
	\centering
	\includegraphics[scale=0.55]{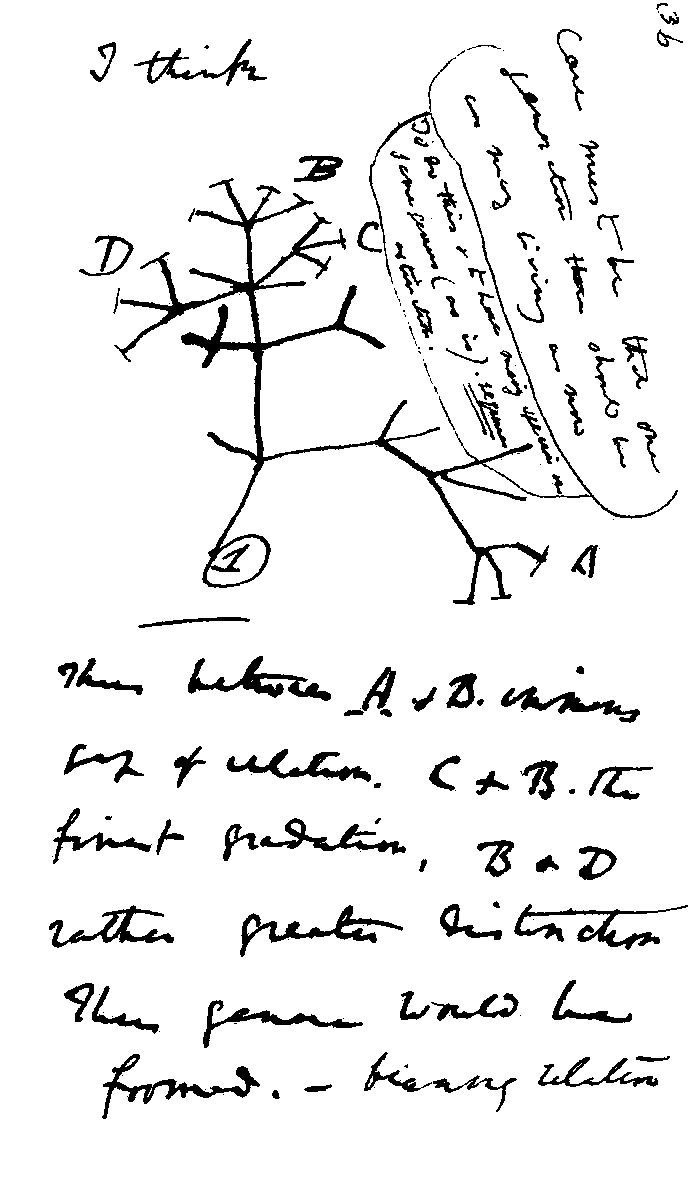}
	\caption{An excerpt of Darwin's notebook with his first phylogenetic tree. Image from \url{https://commons.wikimedia.org/wiki/File:Darwin_tree.png}}
	\label{fig:Darwin_tree}
\end{figure}
\begin{figure}[ht]
	\centering
	\includegraphics[scale=0.1]{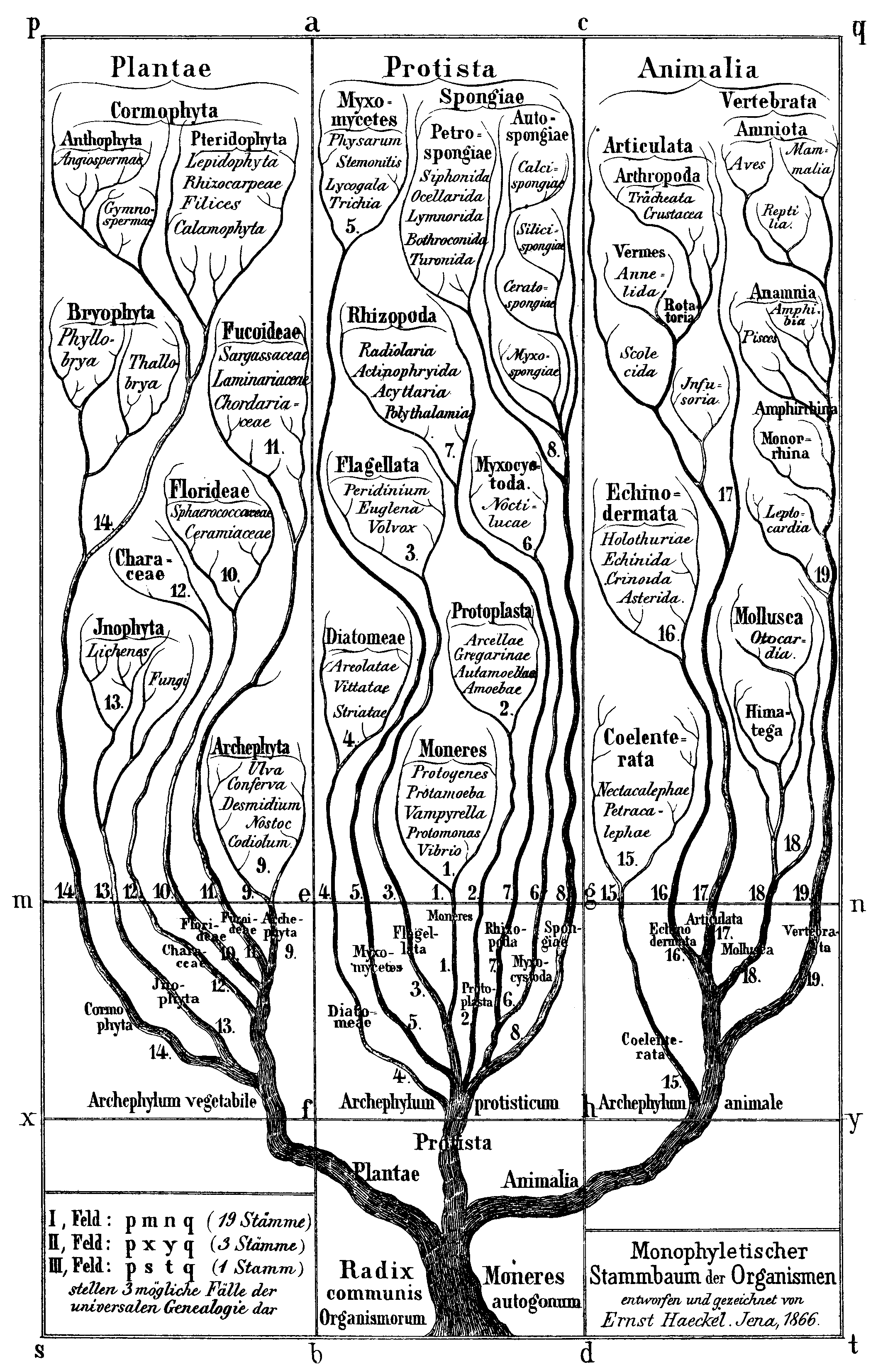}
	\caption{A tree of life by Haeckel with three kingdoms: Plantae, Protista, and Animalia. Image from \url{https://en.wikipedia.org/wiki/File:Haeckel_arbol_bn.png}}
	\label{fig:Haeckel_tree}
\end{figure}
\paragraph{Ptolemy's inequality and the 4-point condition.}
The reader might notice that the difference between inequalities (\ref{ineq:ptolemy}) and (\ref{ineq:4pt_condition}) is that the product of two numbers in the former is replaced with addition in the latter; similarly, addition in (\ref{ineq:ptolemy}) is replaced with a maximum in (\ref{ineq:4pt_condition}). In other words, the four-point condition is a \emph{tropical} version of Ptolemy's inequality. It turns out that this is more than a visual similarity. Tropical Algebraic Geometry is the study of polynomials where the usual addition ($+$) and multiplication ($\times$) are replaced\footnote{Some sources replace $+$ with $\oplus = \min$, but the theories induced by $(\min,+)$ and $(\max,+)$ are equivalent. We use $(\max,+)$ to simplify our exposition.} with $x \oplus y = \max(x,y)$ and $x \otimes y = x + y$ \cite[Section 1.1]{intro-tropical-geometry}. In classical Algebraic Geometry, Ptolemy's theorem $d_{13} d_{24} = d_{12}d_{34} + d_{41}d_{23}$ is the \emph{Pl\"ucker relation} that characterizes the Grassmannian $G_{2,4}$; see Theorem 3.20 and the explanation after Corollary 3.21 in \cite{algebraic-statistics-biology}. On the other hand, the set $\mathcal{T}_n$ of tree-like metric spaces with $n$ points is the tropical Grassmannian $\mathcal{T}_{2,4}$ \cite[Theorem 3.45]{algebraic-statistics-biology}, and its Pl\"ucker relation is $d_{13} + d_{24} \leq \max(d_{12}+d_{34}, d_{41}+d_{23})$, that is, the four-point condition. These ideas, including the tropicalization of the Pl\"ucker relations, are discussed in Section 3.5 of \cite{algebraic-statistics-biology}. In particular, see Example 3.46 and the first paragraph of page 128. In this paper, we take a more geometric approach and use a certain generalized notion of \emph{curvature} to build a link between Ptolemy's inequality and the four-point condition.

\paragraph{Curvature and generalizations of Ptolemy's inequality.}
There exist extensions of Ptolemy's results in Euclidean, spherical, and hyperbolic geometries. In 1947, Haantjes \cite{ptolemy-haantjes} proved a version of Ptolemy's inequality in the spherical and hyperbolic planes. Proofs that equality holds for cyclic quadrilaterals were given by Shirokov \cite{ptolemy-shirokov} in 1924 and Perron \cite{ptolemy-perron} in 1964 for the case of hyperbolic geometry, and by Zeitler \cite{ptolemy-zeitler} in 1966 for non-Euclidean geometries. The work of Kurnik and Volenic \cite{ptolemy-glasnik} in 1967 contains several related results, including an extension of Ptolemy's theorem to $n$ points in any of the three plane geometries using a matrix similar to the $P_\kappa(p_1,\dots,p_m)$ that we consider in Section \ref{sec:val-proofs}.\\
\indent These papers, however, do not consider the converse: what can we say about any four points that satisfy the equality in the non-Euclidean Ptolemaic theorem? This question was addressed in 1970 by Joseph E. Valentine \cite{ptolemy-hyperbolic, ptolemy-spherical}. He gave a proof of the non-Euclidean Ptolemy's inequality and its analogue: if equality holds in the spherical inequality, the four points must lie on a circle \cite[Theorem 5.4]{ptolemy-spherical}; in the hyperbolic case, equality implies that the four points lie on either a circle, a line, a horocycle, or one branch of an equidistant curve \cite[p. 1]{ptolemy-hyperbolic}. Andalafte and Valentine further generalized both the theorem and the inequality to higher dimensional hyperbolic space in 1971. In this case, $n+2$ points in $n$-dimensional hyperbolic space lie on an $(n-1)$-dimensional subspace, an $(n-1)$-dimensional sphere or limiting surface, or a sheet of an $(n-1)$-dimensional equidistant surface if and only if the determinant we call $\gamma_{-1}(p_1, \dots, p_{n+2})$ vanishes \cite[Theorem 4.7]{ptolemy-higher-dim}.\\
\begin{figure}
	\centering
	\includegraphics[scale=0.45]{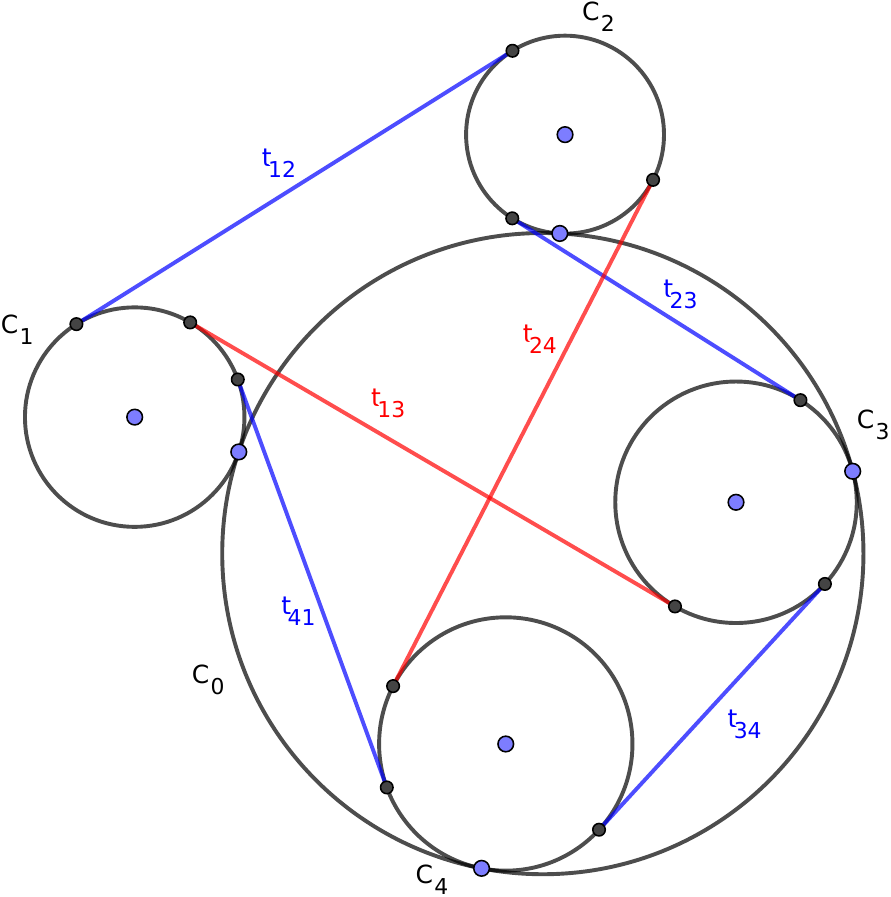}
	\caption{Casey's theorem. Notice that $C_1$ and $C_2$ are in the exterior of $C_0$ while $C_3$ and $C_4$ are in the interior. For this reason, the tangent line between $C_1$ and $C_2$ and that between $C_3$ and $C_4$ are outer tangents, and every other segment is an inner tangent.}
\end{figure}
\indent The Euclidean Ptolemaic theorem has an extension known as Casey's theorem. Let $C_1, C_2, C_3, C_4$ be four non-intersecting circles tangent to a fifth circle $C_0$ in that order. If $C_i$ and $C_j$ are both in the interior or both in the exterior of $C_0$, let $t_{ij}$ be the length of a common outer tangent to $C_i$, $C_j$. Otherwise, let $t_{ij}$ be the length of a common inner tangent\footnote{There are four common tangents between any two non-intersecting circles such that neither is in the interior of the other. Such a tangent is called \emph{inner} if it intersects the line that joins the centers of the circles and \emph{outer} otherwise. Each such pair of circles has two inner and two outer tangents, and both inner (resp. outer) tangents have the same length.}. Then, the lengths $t_{ij}$ satisfy the Ptolemaic identity
\begin{equation}\label{eq:Casey_theorem}
	t_{13}\,t_{24} = t_{12}\,t_{34}+t_{23}\,t_{41}.
\end{equation}
Conversely, for any four circles $C_1, C_2, C_3, C_4$ and choices of inner or outer tangents, if the lengths $t_{ij}$ satisfy (\ref{eq:Casey_theorem}), then there exists a circle $C_0$ that is tangent to all $C_i$. The degenerate case where all circles have radius 0 is Ptolemy's theorem.\\
\indent This is another result that has non-Euclidean generalizations. Casey proved that (\ref{eq:Casey_theorem}) holds in 1866 \cite[Article 1]{casey-theorem} (see also \cite[p. 103]{sequel-euclid}). The proof of the converse appears in a book by Johnson published in 1929 \cite[p. 121]{modern-geometry-johnson}. The spherical case appears in the book by M'Clelland and Preston from 1886 \cite{treatise-spherical-trig} and the hyperbolic case, in an article by Kubota from 1912 \cite{casey-theorem-kubota}. Kurnick and Volenic gave a version of Casey's theorem for $n$ circles tangent to a common circle in all three geometries \cite[Theorem 9]{ptolemy-glasnik}. More recently, in 2015 Abromsimov and Mikaiylova \cite{casey-theorem-non-euclidean-planes} gave another proof for both non-Euclidean geometries (although the details are written specifically for the hyperbolic case). Abrosimov and Aseev extended the Euclidean theorem to higher dimensions \cite{casey-theorem-higher-dimensions} in 2018.\\
\indent The hyperbolic \cite{ptolemy-hyperbolic} and spherical \cite{ptolemy-spherical} planes are the prototypical examples of geometries with constant negative and positive curvature. We take this a step further and generalize these inequalities to a class of geodesic metric spaces known as \emph{CAT-spaces}. For now, let's say that a $\text{CAT}(\kappa)$ space is a metric space with curvature bounded above by $\kappa$ (see Section \ref{sec:definitions} for a precise definition). In particular, a $\text{CAT}(0)$ space has non-positive curvature. Furthermore, $\text{CAT}(\kappa)$ spaces with $\kappa<0$ are known to satisfy a property known as hyperbolicity, which can be understood as a relaxation of the 4-point condition (\ref{ineq:4pt_condition}). Indeed, if there exists $\delta \geq 0$ such that for all $x_1,x_2,x_3,x_4 \in X$, the inequality
\begin{equation}\label{ineq:hyperbolicity}
	d_{13}+d_{24} \leq \max(d_{12}+d_{34}, d_{23}+d_{41}) + 2\delta
\end{equation}
is satisfied, then $X$ is called \emph{$\delta$-hyperbolic}\footnote{We use a rearrangement of the original inequality in \cite[p. 89]{gromov-hyperbolic}. See also the discussion after Definition 1.20, Chapter III.H, of \cite{non-positive-curvature}}. The \emph{hyperbolicity} of $X$ is defined as the infimum $\delta_0$ of all $\delta$ that satisfy inequality (\ref{ineq:hyperbolicity}), and we denote it by $\operatorname{hyp}(X) := \delta_0$. $\text{CAT}(\kappa)$ and $\delta$-hyperbolic spaces are, in general, different. For $\kappa<0$, every $\text{CAT}(\kappa)$ space $X$ has $\operatorname{hyp}(X) \leq \ln(2)/\sqrt{-\kappa}$. However, not all $\delta$-hyperbolic spaces for a fixed $\delta \geq 0$ are $\text{CAT}(0)$. For example, every compact metric space is $\delta$-hyperbolic for $\delta \geq \mathbf{diam}(X)$. In particular, the unit sphere $\mathbb{S}^{2}$ with geodesic distance is $\pi$-hyperbolic but it is not $\text{CAT}(0)$ since its curvature is strictly positive; see do Carmo's book \cite[Chapter 3, Example 7]{do-carmo} or our Remark in Section \ref{sec:definitions}.
\paragraph{Hyperbolic groups.}
\indent The hyperbolicity of a metric space first appeared in the work of Gromov \cite[p. 89]{gromov-hyperbolic}. Gromov wanted to define a notion of hyperbolic group with the objective of translating the rich geometry of hyperbolic spaces into group-theoretic results. We describe one of several equivalent constructions. Let $G = \langle S \rangle$ be a group generated by a finite set $S = \{a_1, a_2, \dots, a_n\} \subset G$. Consider a graph $\Gamma(G,S)$ with vertex set $G$, and draw an edge between vertices $g_1$ and $g_2$ if there exists $a_i \in S$ such that $g_2 = a_i \cdot g_1$. $\Gamma(G,S)$ is then made into a metric space by assigning a weight of 1 to each edge. $\Gamma(G,S)$ is known as the \emph{Cayley graph} of $G$ with respect to $S$. A Cayley graph induces a metric on $G$, known as a \emph{word metric}, by setting the distance between $g_1$ and $g_2$ as the length of the shortest path between $g_1$ and $g_2$ in $\Gamma(G,S)$. At this point, one might be tempted to \textit{define} a group $G$ to be hyperbolic if its word metric generated by a Cayley graph $\Gamma(G,S)$ is $\delta$-hyperbolic for some $\delta>0$. However, the construction of $\Gamma(G,S)$ depends on the generating set and, in general, two Cayley graphs $\Gamma(G,S_1), \Gamma(G,S_2)$ for the same group might not be isometric. Despite this, the above notion of a hyperbolic group is well-defined under \emph{quasi-isometry}. A function $f:(X,d_X) \to (Y,d_Y)$ between metric spaces is called a quasi-isometry if there exist constants $A \geq 1$ and $B, C\geq 0$ such that for all $x_1,x_2 \in X$,
\begin{equation*}
	\frac{1}{A} d_X(x_1,x_2) - B \leq d_Y(f(x_1), f(x_2)) \leq A d_X(x_1,x_2) + B,
\end{equation*}
and if for all $y \in Y$, there exists $x_0 \in X$ such that $d_Y(y,f(x_0)) \leq C$. In other words, $f$ is a quasi-isometry if it is almost an isometry and almost surjective as measured by the inequalities above. Returning to our discussion of groups, one of the basic results in geometric group theory says that every pair of Cayley graphs $\Gamma(G,S_1)$ and $\Gamma(G,S_2)$ of the same group are quasi-isometric, irrespective of the generating sets $S_1$ and $S_2$. Furthermore, hyperbolicity is a quasi-isometry invariant. The constants $\delta$ corresponding to different quasi-isometric spaces are usually distinct, but the point is that they all exist. Thus, defining a group to be \emph{hyperbolic} if any of its word metrics is $\delta$-hyperbolic (for some $\delta \geq 0$) is an unambiguous notion.\\
\begin{figure}
	\centering
	\includegraphics[scale=0.20]{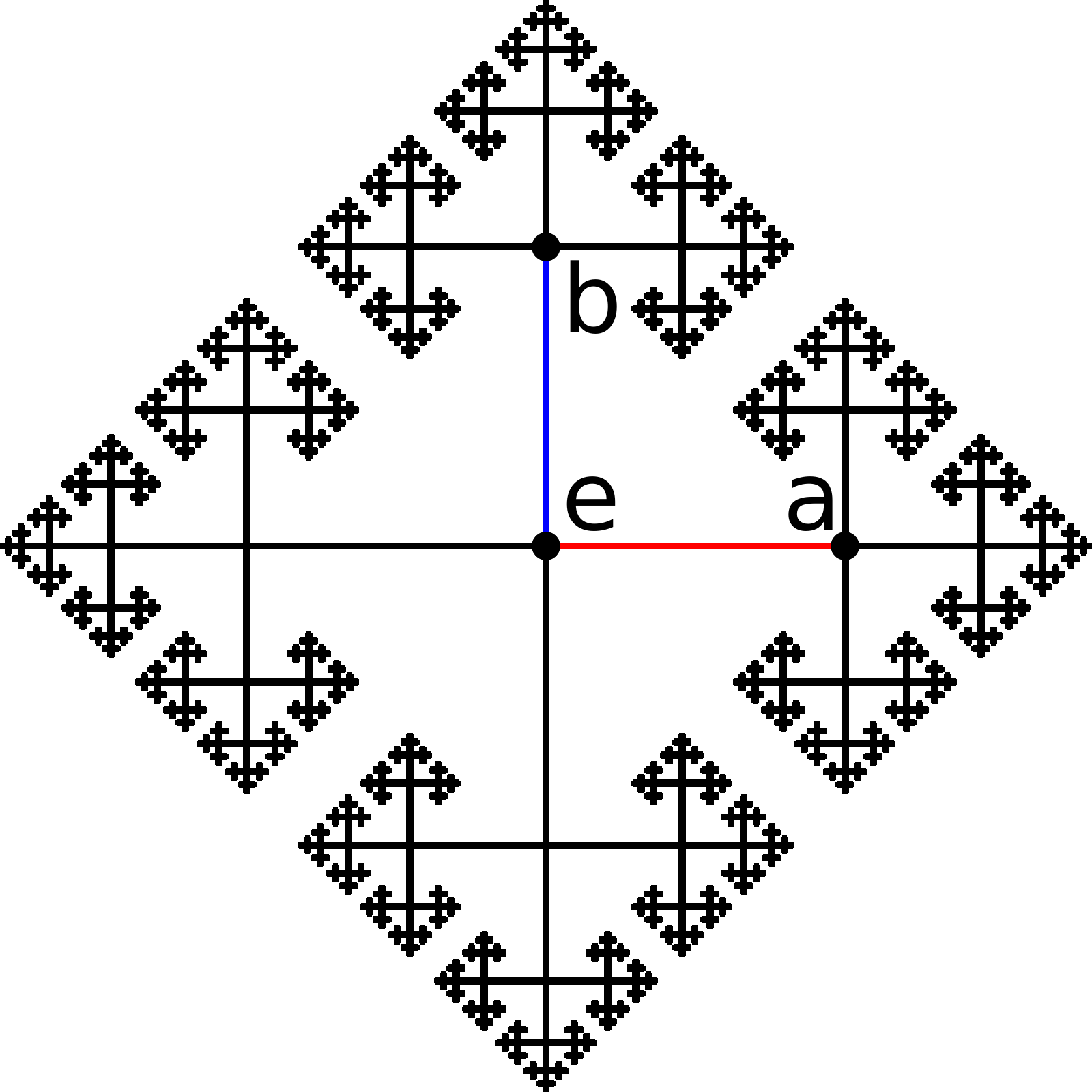}
	\caption{The Cayley graph $\Gamma(F_2, S)$ of the free group $F_2 = \langle S \rangle$ generated by two elements $S = \{ a,b \}$. This graph is a tree and $F_2 \subset \Gamma(F_2,S)$ is a tree-like metric space. Image from \url{https://upload.wikimedia.org/wikipedia/commons/d/d2/Cayley_graph_of_F2.svg}}
	\label{fig:cayley_graph}
\end{figure}
\indent Hyperbolic groups enjoy several properties. For example, the \emph{word problem} on a given finitely generated group $G = \langle S \rangle$ amounts to the question of constructing an algorithm that can determine whether any element $g \in G$ is the identity $e \in G$ using only the properties of $S$. While the word problem is not solvable in general, hyperbolic groups are one class where not only such an algorithm exists, but it runs in quadratic time in the length\footnote{The length of an element (or word) $g = s_1 \cdots s_m \in \langle S \rangle$ is $m$.} of the word; see Theorems 3.4.5 and 2.3.10 of \cite{word-processing-groups}. Hyperbolic groups are also finitely presented and act properly discontinuously and cocompactly by isometries\footnote{The action of a group $G$ on a locally compact $X$ is properly discontinuous if for every compact set $K \subset X$, there are only a finite number of elements $g \in G$ for which $(g\cdot K)\cap K$ is non-empty. An action is cocompact if $X/G$ is compact. We say that $G$ acts by isometries if the function $x \mapsto g \cdot x$ is an isometry for every $g \in G$.} on $\delta$-hyperbolic spaces. This is another setting where the notions of $\text{CAT}$ spaces and $\delta$-hyperbolicity differ. We can make an analogous definition and say that a $\text{CAT}(\kappa)$ group is one that has a properly discontinuous and cocompact action by isometries on a $\text{CAT}(\kappa)$ space. Since $\text{CAT}(\kappa)$ spaces are $\delta$-hyperbolic when $\kappa<0$, every $\text{CAT}(\kappa)$ group is hyperbolic. The converse, however, is an important unsolved question in geometric group theory (see Remark 2.3 in Chapter III.$\Gamma$ of \cite{non-positive-curvature}).\\
\indent Going back to metric trees, their relationship with hyperbolicity goes beyond satisfying inequality (\ref{ineq:hyperbolicity}) for $\delta=0$. One of Gromov's definitions of hyperbolic groups says that, intuitively, a group whose ``word metric looks like a tree when observed from infinity'' is hyperbolic \cite[p. 78]{gromov-hyperbolic}. This motivates thinking of trees and their properties as the limit at infinity of the corresponding properties of a certain family of spaces. Indeed, metric trees (and, more generally, $\mathbb{R}$-trees) are $\text{CAT}(\kappa)$ spaces for all $\kappa \in \mathbb{R}$. This raises the question of whether the 4-point condition (\ref{ineq:4pt_condition}) might be the $(-\infty)$-version of a certain family of inequalities. It turns out that a generalization of Ptolemy's inequality in CAT-spaces gives a family which realizes the idea.

\paragraph{Contributions.}
\indent For the sake of completeness, in this paper we adapt the proof of the non-Euclidean Ptolemaic inequalities given by Valentine in his 1970 papers \cite{ptolemy-hyperbolic, ptolemy-spherical}. This proof is valid in any dimension, similar to \cite{ptolemy-higher-dim}, and works simultaneously for positive and negative curvature. We then show that $\text{CAT}(\kappa)$ spaces also satisfy a $\kappa$-Ptolemaic inequality, which is either the spherical or hyperbolic inequality depending on whether $\kappa>0$ or $\kappa<0$. Lastly, we show that the four-point condition is indeed the limit of the $\kappa$-Ptolemy inequality when $\kappa \to -\infty$, essentially characterizing the former as the $(-\infty)$ form of the latter.

\section{Definitions.}\label{sec:definitions}
Let $x, y \in \mathbb{R}^{n+1}$. Define
\begin{align*}
	\langle x,y \rangle &:= \phantom{-}x_0y_0 + \dots + x_ny_n,\\
	\langle x|y \rangle &:= -x_0y_0 + \dots + x_ny_n.
\end{align*}
These are the usual inner product in $\mathbb{R}^{n+1}$ and the Minkowski bilinear form, respectively. The $n$-dimensional space with constant curvature $\kappa$ is defined as:
\begin{align*}
	M_\kappa^n &:= \left\{x \in \mathbb{R}^{n+1} \ | \ \langle x,x \rangle = \frac{1}{\kappa} \right\}, \text{ if } \kappa>0,\\
	M_0^n &:= \mathbb{R}^n, \text{ if } \kappa=0, \text{ and}\\
	M_\kappa^n &:= \left\{x \in \mathbb{R}^{n+1} \ | \ \langle x|x \rangle = \frac{1}{\kappa} \text{ and } x_0>0 \right\}, \text{ if } \kappa<0.
\end{align*}

These spaces come equipped with a geodesic distance given by:
\begin{equation*}\label{eq:hyperbolic_distance}
	d_{M_\kappa^n}(x,y) :=
	\begin{cases}
		\frac{1}{\sqrt{+\kappa}} \arccos(\kappa \langle x,y \rangle), & \text{if } \kappa>0,\\
		\|x-y\|, & \text{if } \kappa=0,\\
		\frac{1}{\sqrt{-\kappa}} \operatorname{arcosh}(\kappa \langle x|y \rangle), & \text{if } \kappa<0.
	\end{cases}
\end{equation*}
\indent In other words, if $\kappa>0$, $M_{\kappa}^{n}$ is the $n$-dimensional sphere $\mathbb{S}^{n} \subset \mathbb{R}^{n+1}$ with radius $1/\sqrt{\kappa}$. If $\kappa<0$, $M_{\kappa}^{n}$ is a rescaling of the $n$-dimensional hyperbolic space $\mathbb{H}^{n}$ by a factor of $1/\sqrt{-\kappa}$. Notice that the diameter $D_\kappa$ is $\pi/\sqrt{\kappa}$ if $\kappa>0$ and $\infty$ when $\kappa \leq 0$.\\

\indent More generally, we can assign a notion of curvature to a geodesic metric space $(X, d_X)$ by comparing it with the model spaces defined above. We adopt the definitions from Bridson and Haefliger \cite[Chapter II.1]{non-positive-curvature}. Given two points $x,y \in X$, let $[x,y]$ denote any geodesic joining $x$ and $y$. For a set of three points $x,y,z \in X$, consider the geodesic triangle $\Delta$ formed by the three segments $[x,y]$, $[y,z]$, $[z,x]$. If $d_X(x,y)+d_X(y,z)+d_X(z,x) < 2D_\kappa$, there exists a geodesic triangle $\overline{\Delta} \subset M_\kappa^2$ with vertices $\overline{x}, \overline{y}, \overline{z}$, such that $d_X(x,y) = d_{M_\kappa^2}(\overline{x}, \overline{y})$, $d_X(y,z) = d_{M_\kappa^2}(\overline{y}, \overline{z})$, and $d_X(z,x) = d_{M_\kappa^2}(\overline{z}, \overline{x})$. $\overline{\Delta}$ is called a comparison triangle for $\Delta$, and a point $\overline{p} \in [\overline{x}, \overline{y}]$ is called a comparison point for $p \in [x,y]$ if $d_X(p,x) = d_{M_\kappa^2}(\overline{p}, \overline{x})$ (similarly for the other two segments). $X$ will be called a $\text{CAT}(\kappa)$ space if for every geodesic triangle $\Delta$ and any two points $p,q \in \Delta$, their comparison points satisfy
\begin{equation}\label{ineq:cat_inequality}
	d_X(p,q) \leq d_{M_\kappa^2}(\overline{p}, \overline{q}).
\end{equation}
\indent We can summarize the above construction as follows. See also Figure \ref{fig:comparison_triangles}. Given a geodesic triangle $\Delta \subset X$ (with bounded perimeter if $\kappa>0$), we can find a comparison triangle $\overline{\Delta} \subset M_\kappa^2$ whose sides have the same length as those of $\Delta$. $X$ is a $\text{CAT}(\kappa)$ space if the triangle $\Delta$ is thinner than $\overline{\Delta}$ (as specified by inequality (\ref{ineq:cat_inequality})).

\begin{remark}
	The unit sphere $M_1^{2} = \mathbb{S}^{2}$ is not $\text{CAT}(0)$. To see this, consider the geodesic triangle with vertices $x = (0,0,1)$, $y=(0,1,0)$, and $z=(0,0,1)$. The distance between each pair of points is $\pi/2$, and $d_{\mathbb{S}^{2}}(x,y) + d_{\mathbb{S}^{2}}(y,z) + d_{\mathbb{S}^{2}}(z,x) < 2\pi$. Let $p = z$ and $q = (\frac{\sqrt{2}}{2}, \frac{\sqrt{2}}{2}, 0) \in [x,y]$. For any comparison triangle $\overline{\Delta} \subset M_0^2 = \mathbb{R}^2$, $\overline{q}$ is the leg of the median from $\overline{p}$. Since $\overline{\Delta}$ is an equilateral triangle of side length $\frac{\pi}{2}$, $d_{M_0^2}(\overline{p}, \overline{q}) = \|\overline{p}-\overline{q}\| = \frac{\sqrt{3}}{2} \frac{\pi}{2}$. However, $d_{\mathbb{S}^{2}}(p,q) = \pi/2$, so $p, q$ and their comparison points $\overline{p}, \overline{q}$ fail inequality (\ref{ineq:cat_inequality}).
\end{remark}

\begin{figure}[ht]
	\centering
	\begin{minipage}{0.48\linewidth}
	\centering
	\begin{tikzpicture}[scale=0.59, inner sep=0]
		\tikzmath{
			% Point thickness
			\pt=0.065;
			% Triangle coordinates
			\px1=1.96;		\py1=4.18;
			\px2=0;			\py2=0;
			\px3=6;			\py3=0;
			% Lengths
			\pl = 0.5;
			\ql = 0.5;
			\qk = 0.47;
		}
		% Draw the CAT-triangle
		\draw (\px1,\py1) to [out=255, in= 50] node[pos=\pl] (p0) {} (\px2,\py2);
		\draw (\px1,\py1) to [out=300, in=145] node[pos=\qk] (q0) {} (\px3,\py3);
		\draw (\px2,\py2) to [out= 15, in=165] (\px3, \py3);
		
		% Draw and label points
		\draw[fill] (\px1,\py1) circle [radius=\pt] node[above=5pt]		{$x$};
		\draw[fill] (\px3,\py3) circle [radius=\pt] node[below right=5pt] {$z$};
		\draw[fill] (\px2,\py2) circle [radius=\pt] node[below  left=5pt] {$y$};
		
		% Draw comparison points and lines
		\draw[fill] (p0) circle [radius=\pt] node[ left=5pt] {$p$};
		\draw[fill] (q0) circle [radius=\pt] node[right=5pt] {$q$};
		\draw (p0) -- (q0);
	\end{tikzpicture}
\end{minipage}
\hfill
\begin{minipage}{0.48\linewidth}
	\centering
	\begin{tikzpicture}[scale=0.59, inner sep=0]
	\tikzmath{
		% Point thickness
		\pt=0.065;
		% Triangle coordinates
		\px1=1.96;		\py1=4.18;
		\px2=0;			\py2=0;
		\px3=6;			\py3=0;
		% Lengths
		\pl = 0.5;
		\ql = 0.5;
		\qk = 0.47;
	}
	% Draw the model triangle
	\draw (\px1,\py1) -- node[pos=\pl] (p1) {} (\px2,\py2);
	\draw (\px1,\py1) -- node[pos=\ql] (q1) {} (\px3,\py3);
	\draw (\px2,\py2) -- (\px3,\py3);
	
	% Draw and label points
	\draw[fill] (\px1,\py1) circle [radius=\pt] node[above=5pt] {$\overline{x}$};
	\draw[fill] (\px2,\py2) circle [radius=\pt] node[below  left=5pt] {$\overline{y}$};
	\draw[fill] (\px3,\py3) circle [radius=\pt] node[below right=5pt] {$\overline{z}$};
	
	% Draw comparison points and lines
	\draw[fill] (p1) circle [radius=\pt] node[ left=5pt] {$\overline{p}$};
	\draw[fill] (q1) circle [radius=\pt] node[right=5pt] {$\overline{q}$};
	\draw (p1) -- (q1);
	\end{tikzpicture}
\end{minipage}
	
	\caption{A geodesic triangle $\Delta$ in a $\text{CAT}(\kappa)$ space $X$ with vertices $x,y,z \in X$, and a comparison triangle $\overline{\Delta}$ in the model space $M_\kappa^2$ with vertices $\overline{x}, \overline{y}, \overline{z} \in M_\kappa^2$. There are two points $p \in [x,y]$ and $q \in [x,z]$, and their corresponding comparison points $\overline{p} \in [\overline{x}, \overline{y}]$ and $\overline{q} \in [\overline{x}, \overline{z}]$ satisfy $d_X(p,q) \leq d_{M_\kappa^2}(\overline{p}, \overline{q})$.}
	\label{fig:comparison_triangles}
\end{figure}
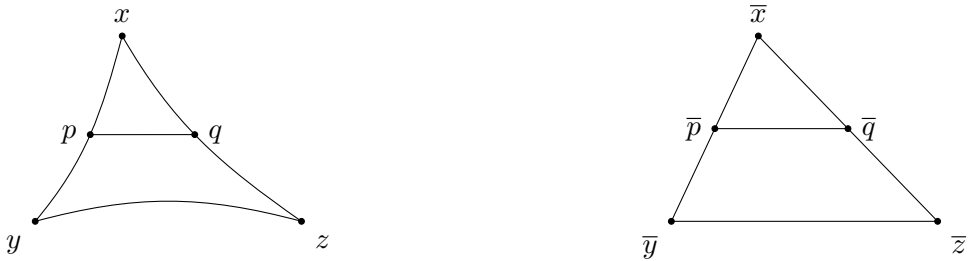

\indent Following \cite[Example II.1.15(5)]{non-positive-curvature}, an \emph{$\mathbb{R}$-tree} is a metric space $T$ such that:
\begin{enumerate}
	\item For every pair of points $x, y \in T$, there exists a unique geodesic $[x,y] \in T$ joining them.
	\item If the intersection of two geodesics $[y,x]$ and $[x,z]$ is the singleton $\{x\}$, then their union $[y,x] \cup [x,z]$ is the geodesic $[y,z]$.
\end{enumerate}

Metric trees are examples of $\mathbb{R}$-trees and, crucially, $\mathbb{R}$-trees are $\text{CAT}(\kappa)$ spaces for all $\kappa$ \cite{non-positive-curvature}. Since metric trees have no cycles, a geodesic triangle $\Delta$ can only be either a line segment or three geodesics joined at a central point which is, intuitively, an infinitely thin triangle (Figure \ref{fig:triangles_varying_curvature}). See \cite[Chapter 3]{R-trees} for more details on $\mathbb{R}$-trees.

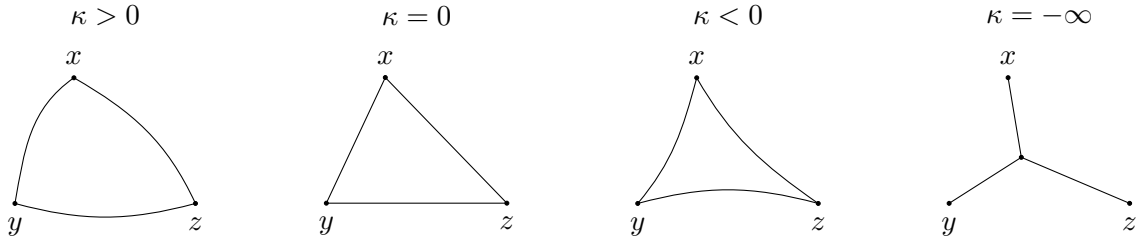
\begin{figure}[ht]
	\centering
	\begin{minipage}{0.24\linewidth}
	\centering
	\begin{tikzpicture}[scale=0.40, inner sep=0]
		\tikzmath{
			% Point thickness
			\pt=0.065;
			% Triangle coordinates
			\px1=1.96;		\py1=4.18;
			\px2=0;			\py2=0;
			\px3=6;			\py3=0;
			% Lengths
			\pl = 0.5;
			\ql = 0.5;
			\qk = 0.47;
		}
		% Draw the CAT-triangle
		\draw (\px1,\py1) to [out=215, in= 80] node[pos=\pl] (p0) {} (\px2,\py2);
		\draw (\px1,\py1) to [out=330, in=115] node[pos=\qk] (q0) {} (\px3,\py3);
		\draw (\px2,\py2) to [out=345, in=195] (\px3, \py3);
		
		% Draw and label points
		\draw[fill] (\px1,\py1) circle [radius=\pt] node[above=5pt]	{$x$};
		\draw[fill] (\px3,\py3) circle [radius=\pt] node[below=5pt] {$z$};
		\draw[fill] (\px2,\py2) circle [radius=\pt] node[below=5pt] {$y$};
		
		% Sign of curvature
		\node at (3,6.25) {$\kappa>0$};
	\end{tikzpicture}
\end{minipage}
\hfill
\begin{minipage}{0.24\linewidth}
	\centering
	\begin{tikzpicture}[scale=0.40, inner sep=0]
	\tikzmath{
		% Point thickness
		\pt=0.065;
		% Triangle coordinates
		\px1=1.96;		\py1=4.18;
		\px2=0;			\py2=0;
		\px3=6;			\py3=0;
		% Lengths
		\pl = 0.5;
		\ql = 0.5;
		\qk = 0.47;
	}
	% Draw the model triangle
	\draw (\px1,\py1) -- node[pos=\pl] (p1) {} (\px2,\py2);
	\draw (\px1,\py1) -- node[pos=\ql] (q1) {} (\px3,\py3);
	\draw (\px2,\py2) -- (\px3,\py3);
	
	% Draw and label points
	\draw[fill] (\px1,\py1) circle [radius=\pt] node[above=5pt] {$x$};
	\draw[fill] (\px2,\py2) circle [radius=\pt] node[below=5pt] {$y$};
	\draw[fill] (\px3,\py3) circle [radius=\pt] node[below=5pt] {$z$};		

	% Sign of curvature
	\node at (3,6.25) {$\kappa=0$};
	\end{tikzpicture}
\end{minipage}
\hfill
\begin{minipage}{0.24\linewidth}
	\centering
	\begin{tikzpicture}[scale=0.40, inner sep=0]
		\tikzmath{
			% Point thickness
			\pt=0.065;
			% Triangle coordinates
			\px1=1.96;		\py1=4.18;
			\px2=0;			\py2=0;
			\px3=6;			\py3=0;
			% Lengths
			\pl = 0.5;
			\ql = 0.5;
			\qk = 0.47;
		}
		% Draw the CAT-triangle
		\draw (\px1,\py1) to [out=255, in= 50] node[pos=\pl] (p0) {} (\px2,\py2);
		\draw (\px1,\py1) to [out=300, in=145] node[pos=\qk] (q0) {} (\px3,\py3);
		\draw (\px2,\py2) to [out= 15, in=165] (\px3, \py3);
		
		% Draw and label points
		\draw[fill] (\px1,\py1) circle [radius=\pt] node[above=5pt]		{$x$};
		\draw[fill] (\px3,\py3) circle [radius=\pt] node[below=5pt] {$z$};
		\draw[fill] (\px2,\py2) circle [radius=\pt] node[below=5pt] {$y$};
		
		% Sign of curvature
		\node at (3,6.25) {$\kappa<0$};
	\end{tikzpicture}
\end{minipage}
\hfill
\begin{minipage}{0.24\linewidth}
	\centering
	\begin{tikzpicture}[scale=0.40, inner sep=0]
		\tikzmath{
			% Point thickness
			\pt=0.065;
			% Triangle coordinates
			\px1=1.96;		\py1=4.18;
			\px2=0;			\py2=0;
			\px3=6;			\py3=0;
			\pxc=2.4;		\pyc=1.53;
			% Lengths
			\pl = 0.5;
			\ql = 0.5;
			\qk = 0.47;
		}
		% Draw the CAT-triangle
		\draw (\px1,\py1) -- (\pxc,\pyc);
		\draw (\px2,\py2) -- (\pxc,\pyc);
		\draw (\px3,\py3) -- (\pxc,\pyc);
		
		% Draw and label points
		\draw[fill] (\px1,\py1) circle [radius=\pt] node[above=5pt]		{$x$};
		\draw[fill] (\px3,\py3) circle [radius=\pt] node[below=5pt] {$z$};
		\draw[fill] (\px2,\py2) circle [radius=\pt] node[below=5pt] {$y$};
		\draw[fill] (\pxc,\pyc) circle [radius=\pt];	% Centroid
		
		% Sign of curvature
		\node at (3,6.25) {$\kappa=-\infty$};
	\end{tikzpicture}
\end{minipage}
	
	\caption{Geodesic triangles in spaces with constant curvature. Moving from left to right, the curvature is positive, zero, and then negative until it reaches $-\infty$. Meanwhile, the triangles become increasingly thinner until only the union of three line segments at a common vertex remains when the curvature reaches $-\infty$.}
	\label{fig:triangles_varying_curvature}
\end{figure}

\paragraph{Notation.} Throughout the paper, whenever we have points $p_1,\dots,p_m$ in a single fixed metric space (either $X$ or $M_\kappa^n$), we denote the distance between $p_i$ and $p_j$ by $d_{ij}$. However, if $X$ is specifically a $\text{CAT}$-space, we write the points of $X$ as $p_i$ and the corresponding comparison points in the model space $M_\kappa^2$ as $\overline{p}_i$. Similarly, we use $d_{ij}$ for the distances between points in $X$ and $\overline{d}_{ij}$ for the distances in the model space $M_\kappa^2$ (instead of the more verbose notation $d_{M_\kappa^2}(\overline{p}, \overline{q})$).

\section{Generalizing Ptolemy's inequality.}\label{sec:val-proofs}
In this section, we adapt the arguments in \cite{ptolemy-higher-dim, ptolemy-hyperbolic, ptolemy-spherical} to give a unified proof of the generalized Ptolemy's inequality in every dimension for positive and negative curvature.\\
\indent Define the functions $c_\kappa: \mathbb{R}_+ \to \mathbb{R}$ and $s_\kappa:\mathbb{R}_+ \to \mathbb{R}$ as follows:
\begin{equation*}
	c_\kappa(t) :=
	\begin{cases}
		\cos\left( \sqrt{ \kappa} t \right), & \text{if } \kappa>0,\\
		1, & \text{if } \kappa=0,\\
		\cosh\left(\sqrt{-\kappa} t\right), & \text{if } \kappa<0,
	\end{cases}
\end{equation*}
and
\begin{equation*}
	s_\kappa(t) :=
	\begin{cases}
		\frac{1}{\sqrt{ \kappa}} \sin\left( \sqrt{\kappa} t \right), & \text{if } \kappa>0,\\
		t, & \text{if } \kappa=0,\\
		\frac{1}{\sqrt{-\kappa}} \sinh\left(\sqrt{-\kappa} t\right), & \text{if } \kappa<0.
	\end{cases}
\end{equation*}
Notice that $s_\kappa$ is increasing on $\mathbb{R}_+$ when $\kappa \leq 0$ and on $[0,D_\kappa/2]$ when $\kappa > 0$.\\
\indent Let $p_1,\dots,p_{m} \in M_\kappa^n$. For $\kappa \neq 0$, define the matrices
\begin{align*}
	\text{CM}_{\kappa}(p_1,\dots,p_{m}) &:= \big( c_\kappa(d_{ij}) \big)_{1 \leq i,j \leq m},\\
	P_{\kappa}(p_1,\dots,p_{m}) &:= \big( s_\kappa^2(d_{ij}/2) \big)_{1 \leq i,j \leq m},
\end{align*}
and their determinants
\begin{align*}
	\Delta_\kappa(p_1,\dots,p_{m}) &:= \det(\text{CM}_{\kappa}(p_1,\dots,p_{m})),\\
	\gamma_\kappa(p_1,\dots,p_{m}) &:= \det(P_{\kappa}(p_1,\dots,p_{m})).
\end{align*}
The first determinant is the Cayley-Menger determinant in spherical and hyperbolic geometry, depending on whether $\kappa>0$ or $\kappa<0$. The following is a classic fact found at the start of Section 67 (when $\kappa>0$) and in Theorem 106.1 (when $\kappa<0$) of Blumenthal's book\footnote{Blumenthal denotes the determinant $\Delta_{\kappa}(p_1,\dots,p_m)$ as $\Delta_{m}(p_1,\dots,p_{m})$ when $\kappa>0$ and $\Lambda_{m}(p_1,\dots,p_{m})$ when $\kappa<0$ in \cite{distance-geometry}.} \cite{distance-geometry}.

\begin{theorem}\label{thm:cm_determinant}
	The determinant $\Delta_\kappa(p_1,\dots,p_{m})$ vanishes for $m \geq n+2$. In contrast, $\Delta_\kappa(p_1,\dots,p_{m})$ is either 0 or has sign $\text{sgn}(\kappa)^{m+1}$ when $m \leq n+1$.
\end{theorem}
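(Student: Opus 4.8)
The plan is to realize the matrix $\CM_\kappa(p_1,\dots,p_m)$ as, up to an overall sign, the Gram matrix of $m$ vectors in an $(n+1)$-dimensional space equipped with a suitable symmetric bilinear form, and then read off the sign of its determinant from the signature of that form. I would use the standard linear models. When $\kappa>0$, set $r=1/\sqrt{\kappa}$ and embed $M_\kappa^n$ as the round sphere of radius $r$ in Euclidean $\mathbb{R}^{n+1}$; if $u_i$ is the unit vector along $p_i$, the spherical law of cosines gives $\langle u_i,u_j\rangle=\cos(\sqrt{\kappa}\,d_{ij})=c_\kappa(d_{ij})$, so $\CM_\kappa(p_1,\dots,p_m)$ is exactly the Euclidean Gram matrix $(\langle u_i,u_j\rangle)_{ij}$. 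When $\kappa<0$, set $r=1/\sqrt{-\kappa}$ and embed $M_\kappa^n$ as the upper sheet of the hyperboloid $\langle x,x\rangle_L=-r^2$ in Minkowski space $\mathbb{R}^{n,1}$, whose form $\langle\cdot,\cdot\rangle_L$ has $n$ positive and one negative direction; normalizing $u_i=x_i/r$ yields $\langle u_i,u_j\rangle_L=-\cosh(\sqrt{-\kappa}\,d_{ij})=-c_\kappa(d_{ij})$, so $\CM_\kappa(p_1,\dots,p_m)=-G_L$ where $G_L=(\langle u_i,u_j\rangle_L)_{ij}$, and hence $\Delta_\kappa=(-1)^m\det G_L$.

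With either realization the vanishing for $m\ge n+2$ is immediate and uniform. The vectors $u_1,\dots,u_m$ lie in a space of dimension $n+1<m$, so they are linearly dependent, say $\sum_i c_i u_i=0$ nontrivially; then $\sum_i c_i\langle u_i,u_j\rangle=\langle\sum_i c_i u_i,\,u_j\rangle=0$ for every $j$, so $(c_i)$ lies in the kernel of the Gram matrix. This uses nothing about the form beyond bilinearity, so the Gram matrix is singular and $\Delta_\kappa=0$ for both signs of $\kappa$. The same computation shows that whenever $u_1,\dots,u_m$ are linearly dependent, which can also occur for $m\le n+1$, one has $\Delta_\kappa=0$; this accounts for the ``$0$'' alternative in the statement.

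It remains to treat $m\le n+1$ with $u_1,\dots,u_m$ linearly independent, spanning a subspace $V$ of dimension $m$. For $\kappa>0$ the ambient form is positive definite, so the restricted Gram matrix is positive definite, $\Delta_\kappa>0$, matching $\sgn(\kappa)^{m+1}=1$. The delicate case is $\kappa<0$, where I need the signature of $\langle\cdot,\cdot\rangle_L$ restricted to $V$. Here I would argue that each $u_i$ is timelike, since $\langle u_i,u_i\rangle_L=-1$, so $V$ contains a timelike vector and thus carries at least one negative direction; as the ambient form has only one negative direction, $V$ carries exactly one. I would also check that $V$ is nondegenerate: a nonzero vector $w\in V\cap V^\perp$ would be null and orthogonal to the timelike $u_1$, but the orthogonal complement of a timelike vector is positive definite, forcing $w=0$. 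Hence the form on $V$ has signature $(m-1,1)$, so $\det G_L$ has sign $(-1)^1=-1$ and $\Delta_\kappa=(-1)^m\det G_L$ has sign $(-1)^{m+1}=\sgn(\kappa)^{m+1}$, as claimed. The main obstacle is exactly this Lorentzian signature computation — establishing that the span of the timelike vectors is nondegenerate of signature $(m-1,1)$ — after which tracking the sign through $\Delta_\kappa=(-1)^m\det G_L$ is routine bookkeeping.
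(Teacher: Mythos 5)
Your proof is correct, but it is not a variant of the paper's argument, because the paper offers no argument: Theorem \ref{thm:cm_determinant} is invoked there as a classical fact, with a citation to Section 67 and Theorem 106.1 of Blumenthal's \emph{Theory and Applications of Distance Geometry}. What you have written is a self-contained replacement for that citation, via the standard linear models: for $\kappa>0$ the matrix $\CM_\kappa(p_1,\dots,p_m)$ is the Euclidean Gram matrix of the unit vectors $u_i$ (spherical law of cosines), and for $\kappa<0$ it is $-G_L$, the negative of the Minkowski Gram matrix (hyperbolic law of cosines in the hyperboloid model). This makes both halves of the statement fall out of one mechanism: the vanishing for $m \geq n+2$ is just singularity of a Gram matrix of linearly dependent vectors (and correctly uses only bilinearity, so it is uniform in $\kappa$), while the sign $\sgn(\kappa)^{m+1}$ is a signature count via Sylvester's law of inertia. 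The one genuinely delicate step, the Lorentzian case, you handle properly: each $u_i$ is timelike, so the span $V$ has at least one negative direction; the ambient index caps it at one; and $V$ is nondegenerate because a null vector orthogonal to a timelike vector must vanish, giving signature $(m-1,1)$, $\sgn(\det G_L)=-1$, and $\sgn(\Delta_\kappa)=(-1)^{m+1}$ after the entrywise negation $\det(\CM_\kappa)=(-1)^m\det G_L$. The trade-off between the two routes is the usual one: the paper's citation keeps its exposition minimal and leans on Blumenthal's distance-geometric development, whereas your argument is elementary, self-contained, and makes transparent \emph{why} the sign alternates with parity of $m$ exactly when $\kappa<0$ --- it is the single negative direction of the Minkowski form propagating through the determinant.
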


The other fact needed for the proof is the Desnanot-Jacobi identity, also known as the Lewis-Carroll identity \cite{dodgson-condensation}. See also \cite{dodgson-combinatorial} for a combinatorial proof given by Zeilberger.
\begin{theorem}\label{thm:lewis_carroll_identity}
	Let $M$ be an $n$-by-$n$ matrix. Denote by $M_{\sim i, \sim j}$ the submatrix obtained by deleting the $i$-th row and $j$-th columns of $M$. If $i<j$ and $h<k$, let $M_{\sim i,j, \sim h,k} := (M_{\sim j, \sim k})_{\sim i, \sim h}$. Then:
	\begin{equation*}
		\det(M)\det(M_{\sim 1,n, \sim 1,n}) = \det(M_{\sim 1, \sim 1}) \det(M_{\sim n, \sim n}) - \det(M_{\sim 1, \sim n}) \det(M_{\sim n, \sim 1}).
	\end{equation*}
\end{theorem}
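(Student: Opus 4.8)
The plan is to prove this classical identity (the Desnanot--Jacobi, or Lewis Carroll, identity) by the adjugate/condensation method, reducing the whole statement to a single determinant computed two different ways. First I would observe that both sides are polynomials in the $n^2$ entries of $M$, so it suffices to establish the identity on the dense open set where $\det(M) \neq 0$, the general case following by continuity. Hence assume $\det(M) \neq 0$. Write $C_{ij} = (-1)^{i+j}\det(M_{\sim i, \sim j})$ for the cofactors, so that the adjugate satisfies $(\mathrm{adj}(M))_{ij} = C_{ji}$ and $M\,\mathrm{adj}(M) = \det(M)\,I$.

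The key device is the auxiliary $n \times n$ matrix $A$ whose first column is the first column of $\mathrm{adj}(M)$, whose last column is the $n$-th column of $\mathrm{adj}(M)$, and whose intermediate columns $2, \dots, n-1$ are the standard basis vectors $e_2, \dots, e_{n-1}$. I would then compute $\det(A)$ in two ways. Performing the generalized Laplace expansion of $\det(A)$ along the standard-basis columns $2,\dots,n-1$ forces the complementary row set to be $\{2,\dots,n-1\}$ as well, which carries sign $(-1)^{2(2+\cdots+(n-1))}=+1$ and collapses the determinant to the $2\times 2$ corner minor, giving
\begin{equation*}
	\det(A) = C_{11}C_{nn} - C_{1n}C_{n1}.
\end{equation*}

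On the other hand, multiplying on the left by $M$ and using $M\,\mathrm{adj}(M)=\det(M)\,I$ column by column shows that the first and last columns of $MA$ are $\det(M)\,e_1$ and $\det(M)\,e_n$, while columns $2,\dots,n-1$ are simply the corresponding columns of $M$. Expanding $\det(MA)$ first along its first column and then along its last column peels off two factors of $\det(M)$ (each expansion carrying sign $+1$, since the surviving pivot sits on the diagonal) and leaves the central $(n-2)\times(n-2)$ block of $M$, which is exactly $M_{\sim 1,n,\sim 1,n}$. Thus $\det(MA) = \det(M)^2\,\det(M_{\sim 1,n,\sim 1,n})$. Combining with $\det(MA) = \det(M)\det(A)$ and cancelling one factor of $\det(M)$ gives $C_{11}C_{nn} - C_{1n}C_{n1} = \det(M)\,\det(M_{\sim 1,n,\sim 1,n})$.

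Finally I would translate the cofactors back into minors: $C_{11} = \det(M_{\sim 1,\sim 1})$, $C_{nn}=\det(M_{\sim n,\sim n})$, and $C_{1n}C_{n1} = (-1)^{(1+n)+(n+1)}\det(M_{\sim 1,\sim n})\det(M_{\sim n,\sim 1}) = \det(M_{\sim 1,\sim n})\det(M_{\sim n,\sim 1})$, the sign being $(-1)^{2n+2}=1$. Substituting yields precisely the stated identity. I expect the only delicate point to be the sign bookkeeping, both in verifying that the Laplace expansion producing $\det(A)$ carries a $+1$ (because the deleted rows and columns occupy matching central positions) and in the cancellation of the two cofactor signs above; everything else is a direct block computation.
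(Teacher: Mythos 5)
Your proof is correct, but there is no proof in the paper to compare it with: the paper invokes this statement as a known classical fact (the Desnanot--Jacobi, or Lewis Carroll, identity) and uses it as a black box in the proof of Theorem \ref{thm:sign_of_ptolemys_determinant}. Your adjugate/condensation argument is the standard proof going back to Jacobi, and it is complete. The key computations all check out: the generalized Laplace expansion of $\det(A)$ along the standard-basis columns kills every term except the one whose row set is $\{2,\dots,n-1\}$, whose sign is $(-1)^{2(2+\cdots+(n-1))}=+1$, leaving the corner minor $C_{11}C_{nn}-C_{1n}C_{n1}$ (your formula is right even though $A_{1n}=C_{n1}$ and $A_{n1}=C_{1n}$, since only the product of the two off-diagonal cofactors enters); the column-by-column computation of $MA$ and the two diagonal-pivot expansions giving $\det(MA)=\det(M)^2\det(M_{\sim 1,n,\sim 1,n})$ are correct; the cofactor-sign cancellation $(-1)^{(1+n)+(n+1)}=1$ is correct; and the reduction to invertible $M$ by density and continuity is legitimate for real (or complex) matrices, which covers the paper's use case, since the identity is applied there to matrices built from $c_\kappa$ and $s_\kappa$ of real distances. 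One optional strengthening: if you wanted the identity over an arbitrary commutative ring, you would replace the density step by proving it for the generic matrix over the integral domain $\mathbb{Z}[x_{11},\dots,x_{nn}]$, where $\det$ is a nonzero polynomial and may be cancelled; for the purposes of this paper, however, your continuity argument suffices.
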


With these tools at hand, we are ready to prove the higher dimensional generalization of Ptolemy's Theorem.
\begin{theorem}\label{thm:sign_of_ptolemys_determinant}
	Let $\kappa \neq 0$ and $p_1,\dots,p_{n+2} \in M_\kappa^{n}$. The determinant $\gamma_\kappa(p_1,\dots,p_{n+2})$ vanishes or has sign $(-1)^{n+1}$.
\end{theorem}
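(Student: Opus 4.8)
The plan is to convert $\gamma_\kappa$ into a bordered spherical/hyperbolic Cayley–Menger determinant and then read off its sign from Theorems~\ref{thm:cm_determinant} and~\ref{thm:lewis_carroll_identity}. Write $\epsilon = \sgn(\kappa)$. The half–angle identities $\cos(2\theta)=1-2\sin^2\theta$ and $\cosh(2\theta)=1+2\sinh^2\theta$ combine into the single relation
\[
 c_\kappa(d_{ij}) = 1 - 2\epsilon\, s_\kappa^2(d_{ij}/2),
\]
so that, writing $J$ for the $(n+2)\times(n+2)$ all–ones matrix,
\[
 P_\kappa(p_1,\dots,p_{n+2}) = \tfrac{\epsilon}{2}\bigl(J - \CM_\kappa(p_1,\dots,p_{n+2})\bigr),
 \qquad
 \gamma_\kappa = \Bigl(\tfrac{\epsilon}{2}\Bigr)^{n+2}\det\bigl(J - \CM_\kappa\bigr).
\]
A cofactor computation (equivalently, the matrix determinant lemma in adjugate form) gives $\det(J-\CM_\kappa) = (-1)^{n+2}\bigl(\Delta_\kappa + \det B\bigr)$, where
\[
 B = \begin{pmatrix} \CM_\kappa & \mathbf 1 \\ \mathbf 1^{\top} & 0 \end{pmatrix}.
\]
Since $\Delta_\kappa(p_1,\dots,p_{n+2}) = 0$ by Theorem~\ref{thm:cm_determinant} (here $m=n+2$), this collapses to $\gamma_\kappa = \bigl(\tfrac{-1}{2\epsilon}\bigr)^{n+2}\det B$.

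Next I would apply the Desnanot–Jacobi identity (Theorem~\ref{thm:lewis_carroll_identity}) to the symmetric $(n+3)\times(n+3)$ matrix $B$, relabelling so that the deleted ``first'' index is one of the points, say $p_i$, and the deleted ``last'' index $\star$ is the bordering row and column. The objects that appear are $\det B$ itself; the factor $\det(B_{\sim\star,\sim\star}) = \Delta_\kappa(p_1,\dots,p_{n+2})$, which vanishes; the factor $\det(B_{\sim i,\star,\sim i,\star}) = \Delta_\kappa(\{p_j : j\neq i\})$ of the remaining $n+1$ points; and the two cross terms, which coincide because $B$ is symmetric and so contribute a perfect square. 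As the vanishing factor kills one product on the right-hand side, the identity reduces to
\[
 \det(B)\cdot \Delta_\kappa(\{p_j : j\neq i\}) = -\bigl(\det B_{\sim i,\sim\star}\bigr)^2 \le 0 .
\]

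Finally I would extract the sign. If the configuration has at least one nondegenerate $(n+1)$–point subfamily $\{p_j:j\neq i\}$, then Theorem~\ref{thm:cm_determinant} gives $\Delta_\kappa(\{p_j:j\neq i\})\neq 0$ with sign $\sgn(\kappa)^{(n+1)+1}=\epsilon^{n}$; dividing shows $\det B$ vanishes or has sign $-\epsilon^{n}$, and tracking the prefactor $(-1/2\epsilon)^{n+2}$ (whose sign is $(-\epsilon)^{n}$) yields that $\gamma_\kappa$ vanishes or has sign $(-\epsilon)^{n}\cdot(-\epsilon^{n}) = (-1)^{n+1}$, as claimed. The remaining difficulty, which I expect to be the main obstacle, is the fully degenerate locus, where \emph{every} $(n+1)$–subset has vanishing Cayley–Menger determinant and the Desnanot–Jacobi identity degenerates to the vacuous $0=0$. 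I would dispatch this by continuity: the configurations admitting a nondegenerate $(n+1)$–subset are dense in $(M_\kappa^n)^{n+2}$ (any $n+1$ points can be perturbed into general position), the map $(p_1,\dots,p_{n+2})\mapsto\gamma_\kappa$ is continuous, and the condition ``$(-1)^{n+1}\gamma_\kappa\ge 0$'' is closed, so the bound established on the dense set propagates to all configurations.
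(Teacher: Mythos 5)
Your proof is correct, and it runs on the same engine as the paper's: the half-angle identity, both halves of Theorem~\ref{thm:cm_determinant}, and the Desnanot--Jacobi identity (Theorem~\ref{thm:lewis_carroll_identity}) applied to a bordering of the Cayley--Menger matrix, with the symmetric cross terms supplying the perfect square. The packaging differs in two places. First, you pass from $\gamma_\kappa$ to the bordered determinant $\det B$ via the matrix determinant lemma (after discarding $\Delta_\kappa(p_1,\dots,p_{n+2})=0$) and then apply Desnanot--Jacobi to $B$, whose distinguished blocks are Cayley--Menger determinants; the paper instead borders $\CM_\kappa$ by $(-1,0,\dots,0)$ and row/column-reduces so that the Ptolemy matrix appears directly as the corner minor $B_{\sim 1,\sim 1}$. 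The two computations are rearrangements of one another, with the vanishing determinant landing on opposite sides of the identity, and the sign bookkeeping agrees. Second --- and this is the genuine divergence --- the degenerate case: the paper assumes every first minor of $\CM_\kappa$ vanishes and argues by rank (the bordered matrix then has rank at most $n+2$, so the minor proportional to $\gamma_\kappa$ vanishes), whereas you treat the case where every principal $(n+1)\times(n+1)$ minor vanishes by perturbation, using density of nondegenerate configurations, continuity of $\gamma_\kappa$, and closedness of $(-1)^{n+1}\gamma_\kappa \geq 0$. Your route is sound, but the density claim deserves one more sentence: $\Delta_\kappa$ is a polynomial in the ambient coordinates of the sphere/hyperboloid model (since $c_\kappa(d_{ij})$ is a rescaled inner product) and is not identically zero, so its zero locus is nowhere dense in the connected manifold $(M_\kappa^n)^{n+1}$. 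As for what each approach buys: the paper's rank argument is purely algebraic and self-contained, while yours is topological but has a case split that is exhaustive by construction --- the paper's relabeling step (``some first minor is nonzero, hence some \emph{principal} first minor is nonzero'') tacitly uses that a singular symmetric matrix attains its rank on a principal submatrix, a fact your continuity branch silently absorbs.
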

\begin{proof}
	Consider the matrix
	\begin{equation*}
		A :=
		\left(
		\begin{array}{c|c}
			-1 & 0 \\
			\hline
			0 & \text{CM}_\kappa(p_1,\dots,p_{n+2})
		\end{array}
		\right)
		=
		\left(
		\begin{array}{c|c}
			-1 & 0 \\
			\hline
			0 & c_\kappa(d_{ij})
		\end{array}
		\right).
	\end{equation*}
	From the half-angle formulas for trigonometric and hyperbolic functions, $c_\kappa$ and $s_\kappa$ satisfy $c_\kappa(2t)-1 = -2\kappa s_\kappa^2(t)$. If we subtract the first row of $A$ from the other rows and the first column from the others, we get the matrix
	\begin{equation*}
		B :=
		\left(
		\begin{array}{c|c}
			-1 & 1 \\
			\hline
			1 & -2 \kappa s_\kappa^2(d_{ij}/2)
		\end{array}
		\right),
	\end{equation*}
	with determinant $\det(B) = \det(A) = -\Delta_\kappa(p_1,\dots,p_{n+2})$. If all the first minors of $\text{CM}_{\kappa}(p_1,\dots,p_{n+2})$ are 0, then $\text{CM}_{\kappa}(p_1,\dots,p_{n+2})$ has rank at most $n$. As a consequence, both $A$ and $B$ have a maximum rank of $n+1$, which means that their first minors are also 0. In particular, $\gamma_\kappa(p_1,\dots,p_{n+2}) = \det(B_{\sim 1, \sim 1}) = 0$.\\
	\indent Suppose, then, that $\text{CM}_{\kappa}(p_1,\dots,p_{n+2})$ has a non-zero minor, and relabel the points so that $\Delta_{\kappa}(p_1,\dots,p_{n+1}) \neq 0$. By Theorem \ref{thm:lewis_carroll_identity}, we have
	\begin{equation*}
		\det(B)\det(B_{\sim 1,n, \sim 1,n}) = \det(B_{\sim 1, \sim 1})\det(B_{\sim n, \sim n}) - \det(B_{\sim 1, \sim n})\det(B_{\sim n, \sim 1}).
	\end{equation*}
	In other words,
	\begin{align*}
		0
		&= -\Delta_\kappa(p_1,\dots,p_{n+2})\det(B_{\sim 1,n, \sim 1,n})\\
		&= (-2\kappa)^{n+2} \ \gamma_\kappa(p_1,\dots,p_{n+2}) \ (-\Delta_\kappa(p_1,\dots,p_{n+1})) - (\det(B_{\sim 1, \sim n}))^2.
	\end{align*}
	The first line is 0 because $\Delta_\kappa(p_1,\dots,p_{n+2}) = 0$ by Theorem \ref{thm:cm_determinant}. Likewise, the sign of $\Delta_\kappa(p_1,\dots,p_{n+1})$ is $\text{sgn}(\kappa)^{n+2}$. Since $(\det(B_{\sim 1, \sim n}))^2$ is positive, the above equation forces
	\begin{equation*}
		\text{sgn}(\gamma_\kappa(p_1,\dots,p_{n+2})) = (-\text{sgn}(\kappa))^{n+2} (-\text{sgn}(\kappa)^{n+2}) = (-1)^{n+1}.
	\end{equation*}
\end{proof}

Calling Theorem \ref{thm:sign_of_ptolemys_determinant} the generalization of Ptolemy's inequality is justified for two reasons. First, the geometric results found in \cite{ptolemy-higher-dim} are analogous to the Euclidean case: Ptolemy's theorem in $\mathbb{R}^2$ holds if and only if the four points are on a line or a circle. Likewise, Theorem 4.7 of \cite{ptolemy-higher-dim} says that when $p_1, \dots, p_{n+2} \in M_{-1}^n$, $\gamma_\kappa(p_1,\dots,p_{n+2})=0$ if and only if the $n+2$ points lie either on a $M_{-1}^{n-1}$ subspace, on an $(n-1)$-dimensional sphere or limiting surface, or on a sheet of an $(n-1)$-dimensional equidistant surface. \cite{ptolemy-hyperbolic, ptolemy-spherical} have analogous results when $n=2$. Speaking of which, the second reason for using Ptolemy's name is that Theorem \ref{thm:sign_of_ptolemys_determinant} takes a familiar form when $n=2$.
\begin{corollary}[$\kappa$-Ptolemy inequality]\label{cor:ptolemy_general_curvature}
	Let $\kappa \in \mathbb{R}$ and $p_1,p_2,p_3,p_4 \in M_\kappa^2$. Then
	\begin{equation}\label{ineq:k_ptolemy}
		s_\kappa(d_{13}/2)s_\kappa(d_{24}/2) \leq s_\kappa(d_{12}/2)s_\kappa(d_{34}/2) + s_\kappa(d_{23}/2)s_\kappa(d_{41}/2).
	\end{equation}
\end{corollary}
\begin{proof}
	The case of $\kappa=0$ is the usual Ptolemaic inequality in the plane. For $\kappa \neq 0$, Theorem \ref{thm:sign_of_ptolemys_determinant} gives $\gamma_\kappa(p_1,p_2,p_3,p_4) \leq 0$. In Valentine's words, ``routine but tedious computations'' \cite{ptolemy-spherical} show that $\gamma_\kappa(p_1,p_2,p_3,p_4)$ decomposes as the product $A \cdot B \cdot C \cdot D$, where
	\begin{align*}
		A &:= -[s_\kappa(d_{13}/2)s_\kappa(d_{24}/2) + s_\kappa(d_{12}/2)s_\kappa(d_{34}/2) + s_\kappa(d_{23}/2)s_\kappa(d_{41}/2)] \\
		B &:= -\phantom{[}s_\kappa(d_{13}/2)s_\kappa(d_{24}/2) + s_\kappa(d_{12}/2)s_\kappa(d_{34}/2) + s_\kappa(d_{23}/2)s_\kappa(d_{41}/2) \\
		C &:= \phantom{-[}s_\kappa(d_{13}/2)s_\kappa(d_{24}/2) - s_\kappa(d_{12}/2)s_\kappa(d_{34}/2) + s_\kappa(d_{23}/2)s_\kappa(d_{41}/2) \\
		D &:= \phantom{-[}s_\kappa(d_{13}/2)s_\kappa(d_{24}/2) + s_\kappa(d_{12}/2)s_\kappa(d_{34}/2) - s_\kappa(d_{23}/2)s_\kappa(d_{41}/2).
	\end{align*}
	The sums $B+C, C+D, D+B$ are all non-negative, so no two of $B,C,D$ are negative. Since the product $B \cdot C \cdot D$ is also non-negative, we get that all three $B,C,D$ must be non-negative. In particular, $B \geq 0$ gives inequality (\ref{ineq:k_ptolemy}).
\end{proof}

\begin{example}
	Let $p_1 = (1,0,0)$, $p_2 = (0,1,0)$, $p_3 = (-1,0,0)$, and $p_4 = (0,-1,0)$ in $M_1^2 = \mathbb{S}^{2}$, the unit sphere in $\mathbb{R}^3$. Observe that $d_{12}=d_{23}=d_{34}=d_{41} = \pi/2$ and $d_{13}=d_{24}=\pi$. The set $X = \{p_1,p_2,p_3,p_4\}$ is a classic example of a non-Ptolemaic metric space, as
	\begin{equation*}
		d_{12}d_{34}+d_{23}d_{41} = \pi^2/2 < \pi^2 = d_{13}d_{24}.
	\end{equation*}
	However, since $X \subset M_1^2$, it satisfies the 1-Ptolemy inequality:
	\begin{equation*}
		\textstyle
		s_1\left(\frac{d_{13}}{2} \right) s_1\left(\frac{d_{24}}{2} \right)
		= 1^2
		\leq 2\left(\frac{\sqrt{2}}{2} \right)^2
		= s_1\left(\frac{d_{12}}{2} \right) s_1\left(\frac{d_{34}}{2} \right) + s_1\left(\frac{d_{23}}{2} \right) s_1\left(\frac{d_{41}}{2} \right).
	\end{equation*}
\end{example}

\begin{remark}
	Let $s_{ij} := s_\kappa(d_{ij}/2)$. D'Andrea and Sombra showed in \cite{cm-determinant-irreducible} that the determinant $\gamma_\kappa(p_1,\dots,p_n)$ is irreducible over $\mathbb{C}[s_{12}, \dots, s_{n-1,n}]$ for $n \geq 5$.
\end{remark}

\section{CAT-spaces.}
The next generalization of Ptolemy's inequality follows from a 4-point characterization of $\text{CAT}$-spaces.
\begin{prop}[\cite{non-positive-curvature}, Proposition II.1.11] \label{prop:4pt_condition_CAT_spaces}
	Let $X$ be a $\text{CAT}(\kappa)$ space. For every 4-tuple $(p_1,p_2,p_3,p_4) \in X$ such that $d_{12}+d_{23}+d_{34}+d_{41} < 2D_\kappa$, there exists a 4-tuple $(\overline{p}_1,\overline{p}_2,\overline{p}_3,\overline{p}_4) \in M_\kappa^2$ such that (addition of indices is done modulo 4):
	\begin{align*}
		d_{i,i+1} &= \overline{d}_{i,i+1} \text{ and }\\
		d_{i,i+2} &\leq \overline{d}_{i,i+2}.\\
	\end{align*}
\end{prop}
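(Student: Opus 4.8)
The plan is to produce the comparison $4$-tuple by cutting the configuration along a diagonal, comparing each of the two resulting geodesic triangles, and gluing the two comparison triangles back together in $M_\kappa^2$. Fix the diagonal $[p_1,p_3]$, which splits the quadrilateral into the geodesic triangles $\Delta(p_1,p_2,p_3)$ and $\Delta(p_1,p_3,p_4)$. First I would verify that comparison triangles exist in $M_\kappa^2$; this is automatic for $\kappa\le 0$ and for $\kappa>0$ requires each triangle to have perimeter $<2D_\kappa$. Since the triangle inequality gives $d_{13}\le\min(d_{12}+d_{23},\,d_{34}+d_{41})$, the perimeter of $\Delta(p_1,p_2,p_3)$ is at most $d_{12}+d_{23}+d_{34}+d_{41}<2D_\kappa$ by hypothesis, and likewise for $\Delta(p_1,p_3,p_4)$; moreover perimeter $<2D_\kappa$ together with the triangle inequality forces each side $<D_\kappa$. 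Thus the comparison triangles $\overline\Delta(\overline p_1,\overline p_2,\overline p_3)$ and $\overline\Delta(\overline p_1,\overline p_3,\overline p_4)$ exist, and I glue them along their common edge $[\overline p_1,\overline p_3]$ (which has length $d_{13}$ in both) with $\overline p_2$ and $\overline p_4$ on opposite sides of the geodesic through $\overline p_1,\overline p_3$. By construction the four sides match exactly, $\overline d_{i,i+1}=d_{i,i+1}$, and $\overline d_{13}=d_{13}$, so the diagonal condition $d_{13}\le\overline d_{13}$ already holds with equality.

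It remains to establish $d_{24}\le\overline d_{24}$. The key observation is that for any point $m$ on the geodesic $[p_1,p_3]\subseteq X$, the point $\overline m\in[\overline p_1,\overline p_3]$ with $\overline d(\overline p_1,\overline m)=d(p_1,m)$ is simultaneously the comparison point of $m$ in \emph{both} comparison triangles, since the shared edge has the same length and $\overline m$ is pinned down by its distance from $\overline p_1$. Applying the $\cat(\kappa)$ inequality in each triangle then yields $d(p_2,m)\le\overline d(\overline p_2,\overline m)$ and $d(m,p_4)\le\overline d(\overline m,\overline p_4)$, so that
\[
  d_{24}\le d(p_2,m)+d(m,p_4)\le\overline d(\overline p_2,\overline m)+\overline d(\overline m,\overline p_4)
\]
for every such $\overline m$. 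If the geodesic $[\overline p_2,\overline p_4]$ meets the segment $[\overline p_1,\overline p_3]$ — equivalently, the glued quadrilateral is convex along this diagonal — then taking $\overline m$ to be the intersection point collapses the right-hand side to $\overline d_{24}$, and the proof is complete.

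The hard part is the non-convex case, where $[\overline p_2,\overline p_4]$ misses the segment $[\overline p_1,\overline p_3]$ because the glued quadrilateral has a reflex angle at $\overline p_1$ or $\overline p_3$; there the estimate above recovers only the triangle inequality. Here I would appeal to Alexandrov's Lemma (\cite{non-positive-curvature}, I.2.16): when the two comparison angles at the reflex vertex sum to at least $\pi$, it produces from the same two triangles a new comparison configuration in $M_\kappa^2$ in which the four outer lengths $d_{i,i+1}$ are unchanged, the reflex angle has been straightened out (so the configuration becomes convex), and the distance from $\overline p_2$ to $\overline p_4$ has not decreased. What makes this legitimate is that the Proposition demands only $d_{i,i+2}\le\overline d_{i,i+2}$ and not equality: straightening enlarges the split diagonal $\overline d_{13}$ beyond its original value $d_{13}$ — which preserves $d_{13}\le\overline d_{13}$ — while moving the figure into the convex regime, where the comparison-point argument of the previous paragraph now delivers $d_{24}\le\overline d_{24}$. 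I expect this invocation of Alexandrov's Lemma, namely checking that the straightening keeps the outer side lengths fixed and does not shrink the relevant diagonal, to be the one genuinely delicate step.
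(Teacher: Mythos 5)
Your route is the standard one---in fact it is essentially the proof of Proposition II.1.11 in \cite{non-positive-curvature}, which this paper quotes without reproving: split along a diagonal, glue the two comparison triangles in $M_\kappa^2$, settle the ``convex'' case by the comparison-point/crossing argument, and treat the remaining case with Alexandrov's Lemma. Your perimeter bookkeeping (each triangle has perimeter $<2D_\kappa$, hence comparison triangles and the geodesic $[p_1,p_3]$ exist) and your convex case are both correct.

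The gap is in the final step. After Alexandrov's Lemma straightens the reflex vertex, say $\overline{p}_1$, the two triangles of the new configuration are \emph{no longer comparison triangles for $\Delta(p_1,p_2,p_3)$ and $\Delta(p_1,p_3,p_4)$ in $X$}: the shared side $[\tilde{p}_1,\tilde{p}_3]$ of the straightened figure has length $\geq d_{13}$, in general strictly larger, whereas a comparison triangle must realize $d_{13}$ exactly. So ``the comparison-point argument of the previous paragraph'' cannot be re-run there; the $\cat(\kappa)$ inequality simply does not apply to the straightened figure, and the estimate $d(p_2,m)\leq \overline{d}(\tilde{p}_2,\tilde{m})$ that you would need has no justification. (Nor can you route through the original glued figure: in the non-convex case you have \emph{not} established $d_{24}\leq\overline{d}_{24}$ there, so ``the diagonal did not decrease'' does not connect back to $X$.) The repair is simpler than what you propose. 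Alexandrov's Lemma straightens the angle at $\overline{p}_1$ to exactly $\pi$, i.e.\ it places the new point $\tilde{p}_1$ \emph{on} the geodesic segment $[\tilde{p}_2,\tilde{p}_4]$ with $\overline{d}(\tilde{p}_2,\tilde{p}_1)=d_{12}$ and $\overline{d}(\tilde{p}_1,\tilde{p}_4)=d_{14}$. Hence $\overline{d}(\tilde{p}_2,\tilde{p}_4)=d_{12}+d_{14}\geq d_{24}$ by the triangle inequality in $X$, while $\overline{d}(\tilde{p}_1,\tilde{p}_3)\geq d_{13}$ is precisely the lemma's conclusion; both diagonal inequalities hold and no comparison argument is needed in this case at all. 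One further point worth making explicit when $\kappa>0$: the hypothesis $d_{12}+d_{23}+d_{34}+d_{41}<2D_\kappa$ is exactly what licenses the application of Alexandrov's Lemma, whose statement in \cite{non-positive-curvature} carries the same perimeter restriction.
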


We take a moment here to bound the hyperbolicity of $\text{CAT}(\kappa)$ spaces when $\kappa<0$ using the above characterization.
\begin{corollary}
	\label{cor:hyperbolicity-cat-spaces}
	A $\text{CAT}(\kappa)$ space $X$ is hyperbolic when $\kappa<0$ and, in that case, $\operatorname{hyp}(X) \leq \ln(2)/\sqrt{-\kappa}$.
\end{corollary}
\begin{proof}
	Let $p_1,p_2,p_3,p_4 \in X$ and find the comparison points $\overline{p}_1, \overline{p}_2, \overline{p}_3, \overline{p}_4 \in M_\kappa^2$ that satisfy the 4-point characterization in Proposition \ref{prop:4pt_condition_CAT_spaces}. If $\delta > \operatorname{hyp}(M_\kappa^2)$, then
	\begin{align*}
		d_{13}+d_{24}
		&\leq \overline{d}_{13} + \overline{d}_{24} \\
		&\leq \max(\overline{d}_{12}+\overline{d}_{34}, \overline{d}_{23}+\overline{d}_{41})+2\delta \\
		&= \max(d_{12}+d_{34}, d_{23}+d_{41})+2\delta.
	\end{align*}
	Hence, $\operatorname{hyp}(X) \leq \operatorname{hyp}(M_\kappa^2)$. On the other hand, observe that $M_{-1}^2$ is isometric to the scaled space $\sqrt{-\kappa} \cdot M_{\kappa}^2$, so $\operatorname{hyp}(M_\kappa^2) = \operatorname{hyp}(M_{-1}^2)/\sqrt{-\kappa}$. Since $\operatorname{hyp}(M_{-1}^2) = \ln(2)$ (see \cite[Corollary 5.4]{strong-hyperbolicity} by Nica and {\v S}pakula), we obtain the result.
\end{proof}
\begin{remark}
	Notice that a space $X$ that is $\text{CAT}(\kappa)$ for every $\kappa \in \mathbb{R}$ is $0$-hyperbolic because the upper bound $\ln(2)/\sqrt{-\kappa}$ in Corollary \ref{cor:hyperbolicity-cat-spaces} goes to 0 as $\kappa \to -\infty$.
\end{remark}
Coming back to the main topic, the generalization of Ptolemy's inequality to CAT-spaces follows by extending a classic argument. The case when $\kappa=0$, for instance, appears after the statement of Proposition 3.1 in the paper by Buckley, Falk and Wraith \cite{ptolemy-and-cat0}.
\begin{theorem}\label{thm:ptolemy_cat_spaces}
	Let $X$ be a $\text{CAT}(\kappa)$ space and $p_1,p_2,p_3,p_4 \in X$ such that $d_{12}+d_{23}+d_{34}+d_{41} < 2D_\kappa$. Then
	\begin{equation*}\label{eq:ptolemy-cat-spaces}
		s_\kappa(d_{13}/2)s_\kappa(d_{24}/2) \leq s_\kappa(d_{12}/2)s_\kappa(d_{34}/2) + s_\kappa(d_{41}/2)s_\kappa(d_{23}/2).
	\end{equation*}
\end{theorem}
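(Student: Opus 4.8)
The plan is to reduce the statement to the model-space inequality of Corollary~\ref{cor:ptolemy_general_curvature} by the standard comparison argument, just as in the $\cat(0)$ case of \cite{ptolemy-and-cat0}. The hypothesis $d_{12}+d_{23}+d_{34}+d_{41} < 2D_\kappa$ is exactly what is needed to produce a comparison quadrilateral, so it will be consumed at the very first step.

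First I would apply Proposition~\ref{prop:4pt_condition_CAT_spaces} to the tuple $(p_1,p_2,p_3,p_4)$. This yields comparison points $\overline{p}_1,\overline{p}_2,\overline{p}_3,\overline{p}_4 \in M_\kappa^2$ whose sides match the original configuration, $\overline{d}_{i,i+1}=d_{i,i+1}$, and whose diagonals dominate, $d_{i,i+2}\le \overline{d}_{i,i+2}$ (indices mod $4$). Next I would invoke Corollary~\ref{cor:ptolemy_general_curvature} on these four model points to obtain
\begin{equation*}
	s_\kappa(\overline{d}_{13}/2)\,s_\kappa(\overline{d}_{24}/2) \le s_\kappa(\overline{d}_{12}/2)\,s_\kappa(\overline{d}_{34}/2) + s_\kappa(\overline{d}_{23}/2)\,s_\kappa(\overline{d}_{41}/2).
\end{equation*}
Since the sides are preserved, the right-hand side here agrees term by term with the right-hand side of the inequality we want for the $p_i$, so it remains only to transfer the left-hand side back to $X$.

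For the transfer I would use that $s_\kappa$ is increasing on $[0,D_\kappa/2]$ (equivalently, on all of $[0,\infty)$ when $\kappa\le 0$), as noted right after its definition. The comparison points lie in $M_\kappa^2$, which has diameter $D_\kappa$, so $\overline{d}_{13},\overline{d}_{24}\le D_\kappa$ automatically; combined with the diagonal domination this gives $d_{13}\le\overline{d}_{13}\le D_\kappa$ and $d_{24}\le\overline{d}_{24}\le D_\kappa$. Hence all the halved diagonals lie in $[0,D_\kappa/2]$, where monotonicity yields $s_\kappa(d_{13}/2)\le s_\kappa(\overline{d}_{13}/2)$ and $s_\kappa(d_{24}/2)\le s_\kappa(\overline{d}_{24}/2)$; as both factors are non-negative, multiplying and chaining with the model-space inequality completes the proof.

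The only delicate point, and the step I expect to be the main obstacle, is the monotonicity transfer when $\kappa>0$: there $s_\kappa$ is increasing only on the half-diameter interval, so one must confirm that the relevant distances never exceed $D_\kappa$. This is handled precisely by the diameter bound in $M_\kappa^2$ together with the diagonal domination above; everything else is bookkeeping inherited from the two cited results.
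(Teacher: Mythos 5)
Your proof is correct and follows essentially the same route as the paper's: apply Proposition~\ref{prop:4pt_condition_CAT_spaces} to obtain a comparison quadrilateral in $M_\kappa^2$, invoke Corollary~\ref{cor:ptolemy_general_curvature} there, and transfer back via the side equalities and the monotonicity of $s_\kappa$ applied to the dominated diagonals. The only difference is that you spell out the monotonicity and diameter bookkeeping for $\kappa>0$, which the paper leaves implicit in the first inequality of its chain.
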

\begin{proof}
	With a set $\overline{p}_1, \overline{p}_2, \overline{p}_3, \overline{p}_4 \in M_\kappa^2$ as in Proposition \ref{prop:4pt_condition_CAT_spaces}, Corollary \ref{cor:ptolemy_general_curvature} gives
	\begin{align*}
		s_\kappa(d_{13}/2)s_\kappa(d_{24}/2)
		&\leq s_\kappa(\overline{d}_{13}/2) s_\kappa(\overline{d}_{24}/2)\\
		&\leq s_\kappa(\overline{d}_{12}/2) s_\kappa(\overline{d}_{34}/2) + s_\kappa(\overline{d}_{23}/2) s_\kappa(\overline{d}_{41}/2)\\
		&= s_\kappa(d_{12}/2) s_\kappa(d_{34}/2) + s_\kappa(d_{23}/2) s_\kappa(d_{41}/2).
	\end{align*}
\end{proof}

\begin{example}\label{ex:diameter-example}
	In $M_\kappa^2$, the conclusion of Corollary \ref{cor:ptolemy_general_curvature} is stronger than that of Theorem \ref{thm:ptolemy_cat_spaces} because the former holds without the restriction $d_{12}+d_{23}+d_{34}+d_{41} < 2D_\kappa$. However, this condition is necessary in the latter, as shown by the following example.\\
	\indent Let $\epsilon>0$. If $\kappa = (\frac{\pi}{2\pi+\epsilon})^2$, $M_\kappa^2$ is the sphere with radius $R = 2+\frac{\epsilon}{\pi}$ and diameter $D_\kappa = 2\pi+\epsilon$ (recall the definition of $M_\kappa^2$ in Section \ref{sec:definitions}). Since $\kappa < 1$, $M_\kappa^2$ is $\text{CAT}(1)$ by \cite[Theorem II.1.12]{non-positive-curvature}. Let $p_1 = (0,0,R)$, $p_2 = (R,0,0)$, $p_3 = (x_1,x_2,0)$, and $p_4 = (0,0,-R)$ be points of $M_\kappa^2$ chosen so that $d_{23} = \epsilon$. Notice that $d_{12} = d_{24} = d_{13} = d_{34} = \frac{\pi}{2\sqrt{\kappa}} = \pi + \epsilon/2$, and $d_{14} = \frac{\pi}{\sqrt{\kappa}} = 2\pi+\epsilon$. Then we have
	\begin{equation*}
		\textstyle
		d_{12}+d_{23}+d_{34}+d_{41} = 4\pi+ 3\epsilon = 4D_1 + 3\epsilon,
	\end{equation*}
	also
	\begin{align*}
		\textstyle
		s_1(d_{13}/2) s_1(d_{24}/2) = s_1(d_{12}/2) s_1(d_{34}/2) = \sin^2\left( \pi/2 + \epsilon/4 \right),
	\end{align*}
	and
	\begin{align*}
		\textstyle
		s_1(d_{41}/2) s_1(d_{23}/2) = \sin(\pi+\epsilon/2) \sin(\epsilon/2) < 0.
	\end{align*}
	Hence, the points $p_1, p_2, p_3, p_4$ do not satisfy the $1$-Ptolemy inequality even though $M_\kappa^2 \in \text{CAT}(1)$.
\end{example}

\begin{question}{1}
	The fact that $d_{12}+d_{23}+d_{34}+d_{41}=4\pi + 3\epsilon$ is more than double of $2D_1$ in Example \ref{ex:diameter-example} raises the question of whether the condition $d_{12}+d_{23}+d_{34}+d_{41} < 2D_1$ in Theorem \ref{thm:ptolemy_cat_spaces} is tight. We don't have an answer at this point.
\end{question}

\begin{question}{2}
	Analogously to the hyperbolicity of a metric space, we can define the $\kappa$-Ptolemaic defect of a metric space as:
	\begin{equation*}
		P_\kappa(X) := \sup\left[s_\kappa(d_{13}/2)s_\kappa(d_{24}/2) - \big(s_\kappa(d_{12}/2)s_\kappa(d_{34}/2) + s_\kappa(d_{41}/2)s_\kappa(d_{23}/2)\big) \right],
	\end{equation*}
	where $d_{ij}:=d_X(p_i,p_j)$ and the supremum is taken over all $\{p_1,p_2,p_3,p_4\} \subset X$. What properties does this constant have? In particular, is it stable under the Gromov-Hausdorff distance?
\end{question}

\section{The limiting case.}
\indent $\mathbb{R}$-trees are $\text{CAT}(\kappa)$ for every $\kappa \in \mathbb{R}$ \cite[Example II.1.15 (5)]{non-positive-curvature}. By Theorem \ref{thm:ptolemy_cat_spaces}, they satisfy the $\kappa$-Ptolemy inequality for all $\kappa$ and, by the Remark after Corollary \ref{cor:hyperbolicity-cat-spaces}, they also satisfy the 4-point condition. Now we ask the converse: what can we say about a space $X$ that satisfies the $\kappa$-Ptolemy inequality for all $\kappa$?\\
\indent To answer this, we apply the inverse of $s_\kappa(\cdot)$ to inequality (\ref{ineq:k_ptolemy}) and calculate the limit as $\kappa \to -\infty$ (for $\kappa<0$, this inverse is $\frac{2}{\sqrt{-\kappa}}\operatorname{arcsinh}(\cdot)$). Let $a, b, c, d > 0$ such that $c+d \leq a+b$. Define $e_\kappa(t) := \exp(\sqrt{-\kappa}t)$ for $\kappa < 0$ and $t > 0$. Notice that $\lim_{x \to \infty} \frac{\operatorname{arcsinh}(x)}{\ln(x)} = 1$, so for any function $f$ for which $\lim_{\kappa \to -\infty} f(\kappa) = \infty$, the limit $\lim_{\kappa \to -\infty} \frac{2}{\sqrt{-\kappa}} \operatorname{arcsinh}(f(\kappa))$ exists if and only if $\lim_{\kappa \to -\infty} \frac{2}{\sqrt{-\kappa}} \ln(f(\kappa))$ does, and both are equal. Then

\begin{align}
	& \lim_{\kappa \to -\infty} \textstyle \frac{2}{\sqrt{-\kappa}}\textstyle \operatorname{arcsinh}\left[\sinh\left(\frac{\sqrt{-\kappa}}{2}a\right) \cdot \sinh\left(\frac{\sqrt{-\kappa}}{2}b\right) + \sinh\left(\frac{\sqrt{-\kappa}}{2}c\right) \cdot \sinh\left(\frac{\sqrt{-\kappa}}{2}d\right)\right] \nonumber\\
	&=\lim_{\kappa \to -\infty}\textstyle \frac{2}{\sqrt{-\kappa}} \ln\left[
	\frac{e_\kappa(a/2)-e_\kappa(-a/2)}{2} \cdot
	\frac{e_\kappa(b/2)-e_\kappa(-b/2)}{2} + \frac{e_\kappa(c/2)-e_\kappa(-c/2)}{2} \cdot \frac{e_\kappa(d/2)-e_\kappa(-d/2)}{2} \right] \nonumber\\
	&= \lim_{\kappa \to -\infty}\textstyle \frac{2}{\sqrt{-\kappa}} \ln\left[
	e_\kappa\left(\frac{a+b}{2}\right) \left(\frac{1-e_\kappa(-a)}{2} \cdot
	\frac{1-e_\kappa(-b)}{2}
	+\frac{e_\kappa \left(\frac{c+d}{2} \right)}{e_\kappa \left(\frac{a+b}{2} \right)} \cdot
	\frac{1-e_\kappa(-c)}{2} \cdot
	\frac{1-e_\kappa(-d)}{2}
	\right)\right] \nonumber\\
	&= \lim_{\kappa \to -\infty}\textstyle
	\frac{2}{\sqrt{-\kappa}} \ln\left[
	\frac{1-e_\kappa(-a)}{2} \cdot
	\frac{1-e_\kappa(-b)}{2} +
	\frac{e_\kappa \left(\frac{c+d}{2} \right)}{e_\kappa \left(\frac{a+b}{2} \right)} \cdot
	\frac{1-e_\kappa(-c)}{2} \cdot
	\frac{1-e_\kappa(-d)}{2}
	\right] \label{eq:limit_1}\\
	& \phantom{\lim_{\kappa \to -\infty}} +\textstyle\frac{2}{\sqrt{-\kappa}} \ln\left[
	e_\kappa\left(\frac{a+b}{2}\right)\right]. \label{eq:limit_2}
\end{align}
Observe that, for $t>0$, $e_\kappa(-t) \to 0$ as $\kappa \to -\infty$. Then, the term inside the $\ln$ in (\ref{eq:limit_1}) converges to $1/2$ or $1/4$ depending on whether $a+b=c+d$ or not, and $(\ref{eq:limit_1})+(\ref{eq:limit_2})$ converges to $a+b$. Since $c+d \leq a+b$, we can write
\begin{equation*}
	\max(a+b, c+d) = \lim_{\kappa \to -\infty}\textstyle \frac{2}{\sqrt{-\kappa}} \operatorname{arcsinh} \left[ \sinh(\frac{\sqrt{-\kappa}}{2}a) \cdot \sinh(\frac{\sqrt{-\kappa}}{2}b) + \sinh(\frac{\sqrt{-\kappa}}{2}c) \cdot \sinh(\frac{\sqrt{-\kappa}}{2}d)\right].
\end{equation*}
Repeating the calculation above with $c=d=0$ gives
\begin{equation*}
	a+b = \lim_{\kappa \to -\infty}\textstyle \frac{2}{\sqrt{-\kappa}} \operatorname{arcsinh} \left[ \sinh(\frac{\sqrt{-\kappa}}{2}a) \cdot \sinh(\frac{\sqrt{-\kappa}}{2}b)\right].
\end{equation*}

We can now answer the question at the start of this section. If every subset of four distinct points $\{x_1, x_2, x_3, x_4\}$ of a metric space $X$ satisfies the inequality
$$
s_\kappa(d_{13}/2)s_\kappa(d_{24}/2) \leq s_\kappa(d_{12}/2)s_\kappa(d_{34}/2) + s_\kappa(d_{41}/2)s_\kappa(d_{23}/2)
$$
for all $\kappa<0$, when we apply $\frac{2}{\sqrt{-\kappa}} \operatorname{arcsinh}(\cdot)$ to both sides of the inequality, the previous calculation gives
\begin{equation}\label{ineq:infty_ptolemy}
	d_{13}+d_{24} \leq \max(d_{12}+d_{34}, d_{14}+d_{23}).
\end{equation}

Hence, we have the following characterization of $\mathbb{R}$-trees.
\begin{theorem}
	Given a metric space $(X,d_X)$, the following are equivalent:
	\begin{enumerate}
		\item\label{item:tree} $X$ can be isometrically embedded in an $\mathbb{R}$-tree.
		\item\label{item:4pt_cond} Every subset $\{x_1,x_2,x_3,x_4\} \subset X$ satisfies the 4-point condition.
		\item\label{item:k_ptolemy} Every subset $\{x_1,x_2,x_3,x_4\} \subset X$ satisfies the $\kappa$-Ptolemy inequality (\ref{ineq:k_ptolemy}) for all $\kappa$ such that $d_{12}+d_{23}+d_{34}+d_{41} < 2D_\kappa$.
	\end{enumerate}
\end{theorem}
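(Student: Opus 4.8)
The plan is to prove the cycle of implications $(1) \Rightarrow (3) \Rightarrow (2) \Rightarrow (1)$. The equivalence $(1) \Leftrightarrow (2)$ is the classical four-point characterization of metric trees, so the only genuinely new content is threading condition $(3)$ into this chain; for the step $(2) \Rightarrow (1)$ I would simply cite the classical result and concentrate the work on $(1) \Rightarrow (3)$ and $(3) \Rightarrow (2)$, both of which assemble tools already developed above. For $(1) \Rightarrow (3)$: suppose $X$ embeds isometrically in a metric tree $T$, fix a subset $\{x_1,x_2,x_3,x_4\}$ and a curvature $\kappa$ with $d_{12}+d_{23}+d_{34}+d_{41} < 2D_\kappa$. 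Since metric trees are $\cat(\kappa)$ for every $\kappa \in \R$, the tree $T$ is $\cat(\kappa)$, and the four points satisfy the perimeter hypothesis of Theorem \ref{thm:ptolemy_cat_spaces}. Applying that theorem inside $T$ yields precisely the $\kappa$-Ptolemy inequality for the four points, which is the content of $(3)$.

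For $(3) \Rightarrow (2)$ I would invoke the limit computation displayed immediately before the theorem. Because $D_\kappa = \infty$ for every $\kappa < 0$, the perimeter restriction in $(3)$ is vacuous in negative curvature, so the inequality $s_\kappa(d_{13}/2)s_\kappa(d_{24}/2) \leq s_\kappa(d_{12}/2)s_\kappa(d_{34}/2) + s_\kappa(d_{41}/2)s_\kappa(d_{23}/2)$ holds for all $\kappa < 0$ with no side condition. Applying the increasing map $t \mapsto \frac{2}{\sqrt{-\kappa}}\arcsinh(t)$ to both sides preserves the inequality for each fixed $\kappa$, and then letting $\kappa \to -\infty$: the preceding computation with $c=d=0$ sends the left-hand side to $d_{13}+d_{24}$, while the computation with $(a,b,c,d)=(d_{12},d_{34},d_{41},d_{23})$ sends the right-hand side to $\max(d_{12}+d_{34},\, d_{41}+d_{23})$. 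Since a non-strict inequality passes to the limit, this delivers the 4-point condition $d_{13}+d_{24} \leq \max(d_{12}+d_{34},\, d_{14}+d_{23})$.

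The step $(2) \Rightarrow (1)$ is the classical direction, which I would cite. Rather than a deep obstacle, the one point that needs care is bookkeeping about labelings. Condition $(3)$ is stated for the unordered subset, so I am free to apply it under any relabeling; this both guarantees the 4-point condition in the exact form consumed by the classical embedding result and makes the hypothesis $c+d \leq a+b$ in the limit computation harmless, since I may always name the pair with the larger sum as $(a,b)$ so that the dominating sum plays the role of $a+b$ in $\max(a+b,c+d)$. I should also confirm that the two limits exist as claimed (they do, via the $\arcsinh(x)/\ln(x) \to 1$ reduction recalled in the text) so that taking limits of both sides is justified. Once these routine labeling and existence points are checked, the three implications close the cycle and establish the characterization.
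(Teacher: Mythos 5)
Your proposal is correct and is essentially the paper's own argument: (1) $\Rightarrow$ (3) because a metric tree is $\cat(\kappa)$ for every $\kappa$, so Theorem \ref{thm:ptolemy_cat_spaces} applies inside the tree; (3) $\Rightarrow$ (2) by applying the increasing map $\frac{2}{\sqrt{-\kappa}}\arcsinh(\cdot)$ and the paper's limit computation as $\kappa \to -\infty$ (where the perimeter restriction is vacuous since $D_\kappa = \infty$ for $\kappa < 0$); and (2) $\Rightarrow$ (1) by the classical four-point characterization of tree metrics. Your explicit bookkeeping about relabeling so that $c+d \leq a+b$ merely makes precise what the paper leaves implicit, and is not a different approach.
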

\begin{proof}~\\
	$(\ref{item:tree} \Rightarrow \ref{item:k_ptolemy})$: Suppose $X$ is isometrically embedded in the $\mathbb{R}$-tree $T$. Since $T$ is $\text{CAT}(\kappa)$ for all $\kappa \in \mathbb{R}$, by Theorem \ref{thm:ptolemy_cat_spaces}, every 4-point subset of $T$ satisfies the $\kappa$-Ptolemy inequality for all $\kappa \in \mathbb{R}$ for which $d_{12}+d_{23}+d_{34}+d_{41} < 2D_\kappa$. In particular, the inequality holds for subsets of $X \hookrightarrow T$.\\
	$(\ref{item:k_ptolemy} \Rightarrow \ref{item:4pt_cond})$: Let $A := \{x_1, x_2, x_3, x_4\}$ be a subset of $X$ that satisfies (\ref{ineq:k_ptolemy}) for all $\kappa \in \mathbb{R}$ such that $d_{12}+d_{23}+d_{34}+d_{41} < 2D_\kappa$. If $x_i = x_j$ for some $i \neq j$, then $A$ has at most 3 points. The 4-point condition (\ref{ineq:infty_ptolemy}) follows from casework and the triangle inequality. On the other hand, if $x_i \neq x_j$ whenever $i \neq j$, then every distance $d_{ij}$ is non-zero. Hence, the calculation at the start of this section applies, we obtain (\ref{ineq:infty_ptolemy}) and, with that, item \ref{item:4pt_cond}.\\
	$(\ref{item:4pt_cond} \Rightarrow \ref{item:tree})$: This is a well-known fact; see \cite[Theorem 3.38]{R-trees}.
\end{proof}

\indent With this, our proof that the 4-point condition (\ref{ineq:infty_ptolemy}) is the tropicalization of Ptolemy's inequality is finished. In so doing, we established a connection between the 4-point condition (and more generally, Gromov hyperbolicity) with one of the great Greek theorems of antiquity. In a way, it is fascinating that stepping out of classical Euclidean geometry provided the bridge needed to link ideas such as trigonometry, curvature, group theory, trees, and genetics. Ultimately, it all started with an elegant theorem from ancient Greece, one that became central in the development of astronomy, trigonometry and mathematics as a whole.

\begin{acknowledgment}{Acknowledgment.}
	The authors wish to thank Mike Davis for helpful discussions, to Josiah Oh and Ranthony Edmonds for proofreading and suggestions to improve the manuscript, and to the anonymous reviewers for corrections and for suggesting an improved version of Example \ref{ex:diameter-example}.
\end{acknowledgment}

\vfill\eject
\end{document}